\def\NZQ{\Bbb}               % the font for N,Z,Q,R,C
\def\ZZ{{\NZQ Z}}
\def\frk{\frak}               % font for "Fraktur"
\def\pp{{\frk p}}
\def\mm{{\frk m}}
\def\Phi{{\frk n}}
\def\Phi{{\frk N}}
\def\opn#1#2{\def#1{\operatorname{#2}}} % to make operators
\opn\chara{char} \opn\length{\ell} \opn\pd{pd} \opn\rk{rk}
\opn\projdim{proj\,dim} \opn\injdim{inj\,dim} \opn\rank{rank}
\opn\depth{depth} \opn\sdepth{sdepth} \opn\fdepth{fdepth}
\opn\grade{grade} \opn\height{height} \opn\embdim{emb\,dim}
\opn\codim{codim}  \opn\min{min} \opn\max{max}
\opn\Tr{Tr} \opn\bigrank{big\,rank}
\opn\superheight{superheight}\opn\lcm{lcm}
\opn\trdeg{tr\,deg}%\emph{
\opn\reg{reg} \opn\lreg{lreg} \opn\ini{in} \opn\lpd{lpd}
\opn\size{size}
\opn\div{div} \opn\Div{Div} \opn\cl{cl} \opn\Cl{Cl}
\opn\Spec{Spec} \opn\Supp{Supp} \opn\supp{supp} \opn\Sing{Sing}
\opn\Ass{Ass} \opn\Min{Min}
\opn\Ann{Ann} \opn\Rad{Rad} \opn\Soc{Soc}
\opn\Im{Im} \opn\Ker{Ker} \opn\Coker{Coker} \opn\Am{Am}
\opn\Hom{Hom} \opn\Tor{Tor} \opn\Ext{Ext} \opn\End{End}
\opn\Aut{Aut} \opn\id{id}  \opn\deg{deg}
\opn\nat{nat}
\opn\pff{pf}%   \pf exists already
\opn\Pf{Pf} \opn\GL{GL} \opn\SL{SL} \opn\mod{mod} \opn\ord{ord}
\opn\Gin{Gin} \opn\Hilb{Hilb}
\opn\aff{aff} \opn\con{conv} \opn\relint{relint} \opn\st{st}
\opn\lk{lk} \opn\cn{cn} \opn\core{core} \opn\vol{vol}
\opn\gr{gr}
\def\pot#1#2{#1[\kern-0.28ex[#2]\kern-0.28ex]}
\newcommand{\fracs}[2]{\displaystyle\frac{#1}{#2}}
\opn\dirlim{\underrightarrow{\lim}}
\opn\inivlim{\underleftarrow{\lim}}
\let\to=\rightarrow
\def\Implies{\ifmmode\Longrightarrow \else
        \unskip${}\Longrightarrow{}$\ignorespaces\fi}
\def\implies{\ifmmode\Rightarrow \else
        \unskip${}\Rightarrow{}$\ignorespaces\fi}
\def\iff{\ifmmode\Longleftrightarrow \else
        \unskip${}\Longleftrightarrow{}$\ignorespaces\fi}
\let\epsilon\varepsilon
\let\phi=\varphi
\let\kappa=\varkappa
\def\qed{\ifhmode\textqed\fi
      \ifmmode\ifinner\quad\qedsymbol\else\dispqed\fi\fi}
\def\textqed{\unskip\nobreak\penalty50
       \hskip2em\hbox{}\nobreak\hfil\qedsymbol
       \parfillskip=0pt \finalhyphendemerits=0}
\def\dispqed{\rlap{\qquad\qedsymbol}}
\opn\dis{dis}
\def\pnt{{\raise0.5mm\hbox{\large\bf.}}}
\opn\Lex{Lex}
\begin{document}

\begin{frontmatter}

\title{\bf Constructive General Neron Desingularization for one dimensional local rings}

\thanks{This paper was partially written during the visit of the first author at the Institute of Mathematics Simion Stoilow of the Romanian Academy supported by a BitDefender Invited Professor Scholarship, 2015. The  support from the Department of Mathematics of the University of Kaiserslautern of the first author and the support from the project  ID-PCE-2011-1023, granted by the Romanian National Authority for Scientific Research, CNCS - UEFISCDI  of the second author are gratefully acknowledged. }

\author{Gerhard Pfister}
\address{Gerhard Pfister,  Department of Mathematics, University of Kaiserslautern, Erwin-Schr\"odinger-Str., 67663 Kaiserslautern, Germany}
\ead{pfister@mathematik.uni-kl.de}

\author{Dorin Popescu}
\address{Dorin Popescu, Simion Stoilow Institute of Mathematics of the Romanian Academy, Research unit 5,
University of Bucharest, P.O.Box 1-764, Bucharest 014700, Romania}
\ead{dorin.popescu@imar.ro}

%\maketitle

\begin{abstract} An algorithmic proof of General Neron Desingularization is given here for one dimensional local rings and it is implemented in \textsc{Singular}. Also a theorem recalling Greenberg' strong approximation theorem is presented for one dimensional  local rings.
\end{abstract}

\begin{keyword}
Smooth morphisms,  regular morphisms, smoothing ring morphisms.\\
 {\it 2010 Mathematics Subject Classification: Primary 13B40, Secondary 14B25,13H05,13J15.}
\end{keyword}

\end{frontmatter}

\section{Introduction}

A ring morphism $u:A\to A'$ has  {\em regular fibers} if for all prime ideals $P\in \Spec A$ the ring $A'/PA'$ is a regular  ring, i.e. its localizations are regular local rings. It has {\em geometrically regular fibers}  if for all prime ideals $P\in \Spec A$ and all finite field extensions $K$ of the fraction field of $A/P$ the ring  $K\otimes_{A/P} A'/PA'$ is regular.
If for all $P\in \Spec A$ the fraction field of $A/P$ has characteristic $0$ then the regular fibers of $u$ are geometrically regular fibers. A flat morphism $u$ is {\em regular} if its fibers are geometrically regular.  If $u$ is regular of finite type then $u$ is called {\em smooth}.  A localization of a smooth algebra is called {\em essentially smooth}.

In Artin approximation theory (introduced in \cite{A}) an important result (see \cite{P4}) is the following theorem, generalizing the Neron Desingularization \cite{N}, \cite{A}.
\vskip 0.3 cm
\begin{thm}[General Neron Desingularization, Popescu \cite{P0}, \cite{P}, \cite{P1},  Andr\'e \cite{An}, Swan \cite{S}, Spivakovsky \cite{Sp}\label{gnd}]  Let $u:A\to A'$ be a  regular morphism of Noetherian rings and $B$ a finite type $A$-algebra. Then  any $A$-morphism $v:B\to A'$   factors through a smooth $A$-algebra $C$, that is $v$ is a composite $A$-morphism $B\to C\to A'$.
\end{thm}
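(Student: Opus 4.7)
The plan is to construct the smooth $A$-algebra $C$ by iteratively enlarging $B$ inside $A'$, following the Neron--Popescu desingularization strategy. First I would reduce to essentials: by a standard filtered-colimit argument (writing $A$ as a filtered colimit of its finite-type $\ZZ$-subalgebras over which all data is defined), one reduces to the case where $A$ is Noetherian. Then, replacing $A'$ by its completion at a prime and $A$ by the contraction, one is reduced to the case where $u\: A\to A'$ is a local homomorphism of Noetherian local rings, which is the situation one has to handle.

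Write $B=A[X_1,\dots,X_n]/(f_1,\dots,f_r)$ and set $y_i=v(X_i)\in A'$, so $f_j(y)=0$ in $A'$. The key invariant is the Jacobian ideal
\[
H_{B/A}=\sum_{\underline{j},\underline{i}}\Delta_{\underline{j},\underline{i}}\cdot\bigl((f_{j_1},\dots,f_{j_h})\: (f_1,\dots,f_r)\bigr),
\]
where the sum runs over multi-indices with $|\underline{j}|=|\underline{i}|=h$ equal to the height of the relation ideal on the relevant component, and $\Delta_{\underline{j},\underline{i}}=\det(\partial f_{j_k}/\partial X_{i_l})$. By the Jacobian criterion $\Spec B\to\Spec A$ is smooth exactly on the complement of $V(H_{B/A})$, so the desired factorization exists if and only if one can enlarge $B$ to a finite-type $A$-subalgebra $C\subseteq A'$ containing $v(B)$ with $v(H_{C/A})A'=A'$; localizing $C$ at a suitable element of $H_{C/A}$ then yields the required smooth $A$-algebra.

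The core of the proof is the desingularization step I would establish: if $v(H_{B/A})A'\ne A'$, then there exists a finite-type enlargement $B\to B'$ factoring $v$ through $B'\to A'$ such that $v(H_{B'/A})A'\supsetneq v(H_{B/A})A'$. To do this, pick a prime $\Pp$ minimal over $v(H_{B/A})A'$, set $\pp=u^{-1}(\Pp)$, and exploit that the fibre $A'\otimes_A\kappa(\pp)$ is geometrically regular. The fibrewise Jacobian criterion then supplies elements $h_1,\dots,h_s\in A'$ --- entries of a cofactor matrix for suitable minors --- that witness how to resolve the non-smoothness at $\Pp$. Introducing new variables $Y_1,\dots,Y_s$ mapped to the $h_k$ and adjoining relations from $A[X,Y]$ that lift the identities the $h_k$ are known to satisfy produces $B'$ with the required improvement. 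Since $A'$ is Noetherian, the ascending chain $v(H_{B/A})A'\subsetneq v(H_{B'/A})A'\subsetneq\cdots$ must terminate, giving the sought $C$.

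The main obstacle is the enlargement lemma. The difficulty is not the local statement at $\Pp$, where regularity of the fibre readily provides witnesses $h_k$, but in patching these witnesses coherently into a single finite-type $A$-algebra $B'$ (not just an $A'$-algebra) and ensuring the Jacobian improvement happens simultaneously at every minimal prime of $v(H_{B/A})A'$. This is where the technical machinery behind the Popescu--Andre--Swan proofs enters: Elkik-style reduction to a single equation, a Frobenius twist handling inseparable residue extensions in positive characteristic, and careful bookkeeping of the "standard smooth" structure so that cofactors defined over $A'$ can be lifted back to relations over $A$. For the one-dimensional local case treated in the present paper, $V(v(H_{B/A})A')$ is a finite set of closed points and a single uniformizer of $A'$ tracks the non-smoothness, which is what makes the enlargement step explicit enough to be carried out constructively and implemented in \sing.
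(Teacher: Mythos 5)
Be careful about what you are being asked to prove. Theorem~\ref{gnd} is stated in the paper as a background result and is attributed to the cited literature (Popescu, Andr\'e, Swan, Spivakovsky); the paper itself does \emph{not} prove it in this generality. What the paper actually proves, constructively, is Theorem~\ref{m}: the case where $A$ and $A'$ are one-dimensional local rings (with further hypotheses: $u$ local, $\mm A'$ maximal, residue field $k$ infinite and embedded compatibly in both rings). Your proposal is therefore aimed at a statement the paper does not itself establish.

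As a sketch of the \emph{general} proof, your outline correctly identifies the Popescu--Swan strategy: pass to the Elkik ideal $H_{B/A}$, observe that $v$ factors through a smooth $A$-algebra exactly when $v(H_{B/A})A'=A'$, and run a Noetherian induction on $v(H_{B/A})A'$ using an enlargement lemma. But that enlargement lemma is not a technical afterthought that ``Elkik-style reduction, a Frobenius twist, and careful bookkeeping'' tidy up --- it \emph{is} the theorem. All of the difficulty in General Neron Desingularization is concentrated in showing that, given a prime $\Pp$ minimal over $v(H_{B/A})A'$, one can produce a finite-type $A$-algebra $B'$ (not merely an $A'$-algebra) through which $v$ factors with $v(H_{B'/A})A'\supsetneq v(H_{B/A})A'$; this requires the residue field reduction, the separable/inseparable dichotomy, Elkik's desingularization of ideals, and the lifting of cofactor identities from $A'$ back to $A$ --- none of which is carried out here. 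Asserting that ``regularity of the fibre readily provides witnesses $h_k$'' and that the rest is patching is precisely the gap. You also include a vacuous step (reducing to $A$ Noetherian, which is already hypothesized) and a reduction to local rings via completion that needs justification, since a smooth factorization over a completion does not automatically descend.

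Finally, note that the paper's one-dimensional proof does \emph{not} proceed by your Noetherian induction at all. After the preparatory Steps 1--5 (making $\Omega_{B_\gamma/A}$ free via the symmetric algebra, adding variables, and arranging $((\det H)((f):I))\cap A$ to be $\mm$-primary), it constructs a smooth $A$-algebra $D$ approximating $v$ modulo $d^{2e+1}$ (Step 6), and then writes down an \emph{explicit} auxiliary system $h,g$ in new variables $T$ so that $E=D[Y,T]/(I,g,h)$, localized at $ss's''$, is standard smooth and receives $v$ (Steps 7--8). This is a single direct construction, not an iterated enlargement; the one-dimensionality is used to produce the $\mm$-primary element $d$ and the bound $e$ with $(0:_A d^e)=(0:_A d^{e+1})$, and no uniformizer is assumed ($A'$ need not be a domain). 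Your closing remark about ``a single uniformizer of $A'$'' therefore misdescribes the mechanism in the paper's special case.
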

\vskip 0.3 cm
The purpose of this paper is to give an algorithmic proof of the above theorem when $A,A'$ are one dimensional local rings, that is Theorem \ref{m}. When $A,A'$ are domains such algorithm is given in \cite{AP}, the case when $A,A'$ are discrete valuation rings proved by N\'eron \cite{N} is given in a different way in \cite{P3} with applications in arcs frame. The present algorithm was implemented by the authors in the Computer Algebra system \textsc{Singular} \cite{Sing} and will be as soon as possible  found in a development version at \begin{verbatim}https://github.com/Singular/Sources/blob/spielwiese/Singular/LIB/.\end{verbatim}

The proof of Theorem \ref{m} splits essentially in three steps. We will give here the idea in case $A$ and  $A'$ are domains. In step 1 we reduce the problem to the case when $H_{B/A}\cap A\not =0$, $H_{B/A}$ being the ideal defining the nonsmooth locus of $B$ over $A$. Let $0\not =d\in H_{B/A}\cap A$. This means geometrically that $\Spec B_d\to \Spec A_d$ is smooth. In the second step we construct a smooth $A$-algebra $D$, $A\subset D\subset A'$ and an $A$-morphism $v':B\to D/d^3D$ such that $v\equiv v'$ modulo $d^3A'$. If $A'$ is the completion $\hat A$ of $A$ we can use $D=A$. The third step resolves the singularity.
If $B=A[Y]/I$, $Y=(Y_1,\ldots,Y_n)$ then we can find  $f=(f_1,\ldots,f_r)$, $r\leq n$  a system of polynomials from $I$ (given a tuple $(b_1,\ldots b_s) $ we denote usually by the corresponding unindexed letter $b$ the vector  $(b_1,\ldots b_s) $), and an  $r\times r$-minor $M$  of the Jacobian matrix $(\partial f_i/\partial Y_j)$ such that $d\equiv MN$ modulo $I$ for some $N\in ((f):I)$, where $(f)$ denotes the ideal generated by the system $f$. Then $v'(MN)=ds$ for some $s\in 1+dD$.  Assume that $M=\det(\partial f_i/\partial Y_j)_{1\leq i,j\leq r}$. Let $H$ be the matrix obtained by adding to  $(\partial f/\partial Y)$ the boarder block $(0|\mbox{Id}_{n-r})$ and let $G'$ be the adjoined matrix of $H$ and $G=NG'$. Consider in $D[Y,T]$, $T=(T_1,\ldots, T_n)$, the ideal $J=((f,s(Y-y')-dG(y')T):d^2)$, where $y'\in D^n$ is lifting $v'(Y)$.
Then $C$ is a suitable localization of the $B\otimes_A D$-algebra $D[Y,T]/(I,J)$ and $v$ extends to $C$ by $v(T)=t=(1/d^2)H(y')(v(Y)-y')$.

Consider the following example.  Let $A={\bf Q}[x]_{(x)}$, $A'={\bf C}[[x]]$, $B=A[Y_1,Y_2]/(Y_1^2+Y_2^2)$, $a\in {\bf C}$  a transcendental element over $\bf Q$, ${\bar u}\in {\bf C}[[x]]\setminus {\bf C}[x]_{(x)}$ and $u=a+x^6{\bar u}$. Let $v$ be given by $v(Y_1)=xu$,  $v(Y_2)=xiu$, where $i=\sqrt{-1}$. In step 1 we change $B$ by $B_1=A[Y_1,Y_2,Y_3]/I$, $I=(Y_1^2+Y_2^2, x-2Y_1Y_3)$ and extend $v$ by $v(Y_3)=1/(2u)$. We have $4Y_1^2Y_3^2\in H_{B/A}$ which implies $d=x^2\in H_{B/A}\cap A$. We define $D=A[a,a^{-1},i]$ and $v'(Y)=y'=(xa,xia,1/(2a))$.
This is step two.

To understand step 3 we simplify the example taking $B=A[Y_1,Y_2]/(Y_1Y_2-x^2)$, $u=1+x^6{\bar u}$ and $v$ given by $v(Y_1)=xu$, $v(Y_2)=x/u$. Then $d=x\in H_{B/A}$,
$D=A$, $y'_1=x=y'_2$. We obtain  $H=\begin{pmatrix}Y_2 & Y_1\\ 0&1\end{pmatrix}$, $G=G'=\begin{pmatrix}1 & -Y_1\\ 0&Y_2\end{pmatrix}$,
$N=1$ and $J=((Y_1Y_2-x^2, Y_1-x-xT_1+x^2T_2, Y_2-x-x^2T_2):x^2)$. We have $J=(xT_1T_2-x^2T_2^2+T_1, Y_1-x-xT_1+x^2T_2, Y_2-x-x^2T_2)$ and we obtain that $C\cong (A[T_1,T_2]/(xT_1T_2-x^2T_2^2+T_1))_{1+xT_2}\cong (A[T_2])_{1+xT_2}$ is a smooth $A$-algebra.

When $A'$ is the completion of a Noetherian local ring $A$ of dimension one we show that we may have a linear Artin function as it happens in the Greenberg's case (see \cite{Gr} and \cite[Theorem 18]{AP}). More precisely, the Artin function is given by $c\to (\rho+1)(e+1)+c$, where $e,\rho$ depend on $A$ and the polynomial system of equations defining $B$ (see Theorem \ref{gr}).

We thank to both Referees for useful  comments and pointing some misprints.

\section{Theorem  \ref{gnd} in one dimensional local rings}

 Let $u:A\to A'$ be a flat morphism of Noetherian local rings of dimension $1$. Suppose that  the maximal ideal $\mm$ of $A$ generates the maximal ideal of $A'$. Moreover suppose that $u$ is a regular morphism, $k$ is infinite and  there exist canonical inclusions $k=A/\mm\subset A$, $k'=A'/\mm A'\subset A'$ such that $u(k)\subset k'$.

  Let $B=A[Y]/I$, $Y=(Y_1,\ldots,Y_n)$. If $f=(f_1,\ldots,f_r)$, $r\leq n$ is a system of polynomials from $I$ then we can define the ideal $\Delta_f$ generated by all $r\times r$-minors of the Jacobian matrix $(\partial f_i/\partial Y_j)$.   After Elkik \cite{El} let $H_{B/A}$ be the radical of the ideal $\sum_f ((f):I)\Delta_fB$, where the sum is taken over all systems of polynomials $f$ from $I$ with $r\leq n$.
Then $B_P$, $P\in \Spec B$ is essentially smooth over $A$ if and only if $P\not \supset H_{B/A}$ by the Jacobian criterion for smoothness.
   Thus  $H_{B/A}$ measures the non smooth locus of $B$ over $A$.
  $B$ is {\em standard smooth} over $A$ if  there exists  $f$ in $I$ as above such that $B= ((f):I)\Delta_fB$.

  The aim of this paper is to give an easy algorithmic  proof of the following theorem.
\vskip 0.3 cm
 \begin{thm} \label{m} Any $A$-morphism $v:B\to A'$   factors through a standard smooth $A$-algebra $B'$.
 \end{thm}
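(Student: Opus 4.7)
The plan follows faithfully the three-step strategy sketched in the introduction, now without any domain hypothesis on $A,A'$; what is used is that $A,A'$ are Noetherian local of dimension one, that $\mm A'=\mm'$, $u$ is regular, and that the residue field $k$ is infinite with $u(k)\subset k'$.

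\medskip
\textbf{Step 1 (Reduction to $H_{B/A}\cap A\neq 0$).} Given $v:B\to A'$, the first goal is to replace the presentation $B=A[Y]/I$ by an enlarged one in which some nonzero $d\in A$ lies in $H_{B/A}$. The device is the one illustrated by the worked example of the introduction: take an element $h\in H_{B/A}$ whose image $v(h)$ is a nonzero element of $A'$, note that (since $\dim A'=1$) some positive power of $v(h)$ is divisible by a nonzero $d\in A$, and adjoin new variables with linear relations so that this divisibility is built into the presentation. Because $\dim A=1$, such a $d$ is $\mm$-primary, so the $d$-adic topology on $A$ is Hausdorff and one has good control over the quotients $A/d^n A$.

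\medskip
\textbf{Step 2 (Lift to a smooth subalgebra modulo $d^3$).} Next, construct a smooth $A$-subalgebra $D\subset A'$ and an $A$-morphism $v':B\to D/d^3 D$ agreeing with $v$ modulo $d^3 A'$. When $A'=\hat A$, take $D=A$ and $v'=v\bmod d^3$. In the general regular case, one uses the hypothesis that $k$ is infinite and $u(k)\subset k'$ to adjoin to $A$, inside $A'$, finitely many elements that are smooth over $A$ and whose images approximate the coefficients of $v(Y)$ well enough; regularity of $u$, together with a Noether-normalisation/generic-smoothness argument over the infinite field $k$, supplies such a $D$.

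\medskip
\textbf{Step 3 (Neron's trick).} With $B=A[Y]/I$, choose $f=(f_1,\ldots,f_r)\subset I$ with $d\equiv MN\pmod I$, where $M$ is an $r\times r$-minor of $(\partial f/\partial Y)$ and $N\in ((f):I)$. Form the bordered matrix $H$ obtained from $(\partial f/\partial Y)$ by appending the block $(0\mid \mathrm{Id}_{n-r})$, let $G'$ be its adjugate, set $G=NG'$, and pick $y'\in D^n$ lifting $v'(Y)$ so that $v'(MN)=ds$ with $s\in 1+dD$. In $D[Y,T]$, $T=(T_1,\ldots,T_n)$, define
\[
J \;=\; \bigl((f,\; s(Y-y') - dG(y')T) : d^2\bigr),
\]
and let $B'$ be the localization of $D[Y,T]/(I,J)$ at $s$. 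The extension of $v$ defined by
\[
v(T) \;=\; \frac{1}{d^2}\, H(y')\bigl(v(Y)-y'\bigr)
\]
is unambiguous since $v(Y)-y'\equiv 0\pmod{d^3 A'}$ and $s\in A'^{\times}$, and provides the desired factorization $B\to B'\to A'$.

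\medskip
\textbf{Main obstacle.} The delicate part is verifying that $B'$ is genuinely \emph{standard} smooth over $A$. One must exhibit an explicit system of $r+n$ generators inside the presentation of $B'$ whose associated Jacobian minor becomes a unit after localizing at $s$. This is forced by the cofactor identity $H\cdot G'=M\cdot \mathrm{Id}_n$ combined with the inclusion $N\in ((f):I)$, which together ensure that the added $T$-variables behave like smooth parameters and that the original relations in $I$ are consequences of $f$ modulo the new $T$-relations. The saturation by $d^2$ in the definition of $J$ eliminates leftover $d$-torsion in the presentation, and the one-dimensionality of $A$ — which guarantees that this $d$-saturation stabilises in a single step and is computable — is what makes the whole construction algorithmic and implementable in \textsc{Singular}.
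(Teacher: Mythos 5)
There is a genuine gap: your proposal is essentially a restatement of the introduction's sketch, which the paper explicitly restricts to the case where $A$ and $A'$ are domains, and several of the moves you make do not survive the passage to the general one-dimensional local setting the theorem actually asserts.

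First, in your Step 1 you pick $h\in H_{B/A}$ with $v(h)\neq 0$ and conclude that some power of $v(h)$ is divisible by a nonzero $d\in A$. When $A'$ has several minimal primes this is not enough: you need $v(h)\notin \pp$ for \emph{every} $\pp\in\Min A'$ before the ideal $(v(h))$ is $\mm A'$-primary and produces an $\mm$-primary $d$ in $A$. More seriously, once you have such a $\gamma$ the assumption $(**)$ only gives, for each minimal prime $\pp$, some system $f_\pp\subset I$ with $\Delta_{f_\pp}((f_\pp):I)\not\subset\pp A'$; the whole difficulty is to replace these by a \emph{single} system $f$ and a \emph{single} bordered minor that works for all $\pp$ simultaneously. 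The paper achieves this via Lemmas \ref{e1}--\ref{e3} (Elkik): first replace $B$ by $S_B(I/I^2)$ to make $(\Omega_{B/A})_\gamma$ free, then adjoin dummy variables $Y'$ to make $(I/I^2)_\gamma$ free, and only then conclude that some power of $\gamma$ lies in $\Delta_f((f):I)$ for one $f$. Your proposal contains no substitute for this mechanism, and Example \ref{k} in the paper shows that $(\Omega_{B_\gamma/A})$ can genuinely fail to be free, so the reduction cannot be skipped. Related to this, in your Step 3 you border $(\partial f/\partial Y)$ by the fixed block $(0\mid\mathrm{Id}_{n-r})$; in the non-domain case the paper instead completes by generic rows from $k^n$ chosen so that $v(\det H)\notin\pp$ for all minimal $\pp$ (this is where $k$ infinite is used), and that choice is not harmless.

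Second, the exponents $d^3$ and $d^2$ you carry over from the introduction are the right ones only when $A$ is a domain. In general one must set $e$ by $(0:_A d^e)=(0:_A d^{e+1})$ and work modulo $d^{2e+1}$, with $h=s(Y-y')-d^eG(y')T$ and $g_i=s^pb_i+s^pT_i+d^{e-1}Q_i$; the torsion analysis (showing $g(t)\in(0:_{A'}d^{e+1})\cap d^eA'=0$, and later $s''I\subset(0:_V d)\cap d^eV=0$ by flatness) is what makes the construction legitimate and your ``saturation stabilises in one step'' remark is not the argument the paper runs. Finally, your ``main obstacle'' paragraph acknowledges that standard smoothness must be verified but does not verify it: the paper's Step 8 exhibits the explicit Jacobian minor $s'\in s^{rp}+(T)$ coming from $(\partial g/\partial T)$ and the element $s''$ coming from $P(y'+s^{-1}d^eG(y')T)=ds''$, and shows $I\subset(h,g)D[Y,T]_{ss's''}$, after which $E_{ss's''}\cong V_{s''}$ is manifestly standard smooth. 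Without producing $s'$, $s''$ and the inclusion $I\subset(h,g)$, the claim that the localization is standard smooth over $A$ is unproved.
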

\vskip 0.3 cm
We consider  in the algorithmic part the following assumption, which we will keep in the whole paper

  $(*)$ $A$ is essentially of finite type over a field $k$, let us say $A\cong (k[x]/J)_{(x)}$ for some variables $x=(x_1,\ldots x_m)$, and the completion of $A'$ is $k'[[x]]/(J)$.

  When $v$ is defined by polynomials $y$ from $k'[x]$ then our problem is easy. Let $L$ be the field obtained by adjoining to $k$ all coefficients of $y$. Then $R=(L[x]/(J))_{(x)}$ is a subring of $A'$ containing $\Im v$ which is essentially smooth over $A$. Then we may take $B' $ as a standard smooth $A$-algebra such that $R$ is a localization of $B'$.
Thus we will not suppose in this paper that $y$ is polynomial and therefore $L$ is not necessarily a finite type field extension of $k$.

In the proof we need to know that $v(H_{B/A})$ is not contained in any minimal prime ideal of $A'$. In theory, we may reduce to this case as it follows. Let $\pp\in \Min A'$. Since $u$ is regular, it induces a regular map $u_{\pp}:A_{(\pp\cap A)}\to A'_{\pp}$ of local Artinian rings
(in particular $k(\pp)\otimes_{A_{\pp\cap A}}u_p$, $k(\pp)= A_{(\pp\cap A)}/(\pp\cap A)A_{(\pp\cap A)} $ is a separable field extension). Note that $A_{(\pp\cap A)}\supset k(\pp)$ because of $(*)$ and $A'_{\pp}$ is a filtered inductive limit of its subrings of the form $E_{F_{\pp}}=A_{(\pp\cap A)}\otimes_{k(\pp)}F_{\pp}$ for all finite type field subextension  $F_{\pp}/k(\pp)$ of $(A'_{\pp}/\pp A'_{\pp})/k(\pp)$. We may change $B$ by a finite type $B$-algebra $\tilde B$ of $A'$ such that ${\tilde B}_{\pp\cap {\tilde B}}\cong E_{F_{\pp}}$. It follows that $v(H_{B/A})\not \subset \pp A'$ for all $\pp\in \Min A'$. Next we will assume from the beginning that

$(**)$ $v(H_{B/A})\not \subset \pp A'$ for all $\pp\in \Min A'$.

Unfortunately, the computer cannot decide this since we are not able to give the whole information concerning the coefficients of $y$.   But we are able to decide if $v(H_{B/A})$ is not contained  in $\mm^NA'+\pp$ for $N>>0$ and one $\pp\in \Min A'$. In the following we suppose that  $v(H_{B/A})\not \subset \mm^NA'+\pp$ for all $\pp\in \Min A'$ and a certain $N>>0$. Choose $\gamma\in H_{B/A}$ such that $v(\gamma)$ is not in $\cup_{\pp\in \Min A'} \pp +\mm^NA'$.

The idea of the proof of Theorem \ref{m} is  to find $f=(f_1,\ldots,f_r)$ in $I$ and a $d\in v(((f):I)\Delta_f)A'\cap A$ which is not in  $\cup_{\pp\in \Min A'} \pp $. The assumption $(**)$ gives just that there exists $f_{\pp}$ for any $\pp\in \Min A'$ such that $\Delta_{f_{\pp}}((f_{\pp}):I)\not \subset   \pp A'$ for all $\pp\in \Min A'$. The main problem is to reduce to the case when $f_{\pp}$ does not depend of $\pp$. Actually, this follows if $I_{\gamma}/I^2_{\gamma}$ is free over $B_{\gamma}$.  In the next three lemmas containing some results of Elkik \cite{El} (in the form used in \cite{P0}, \cite{S}, \cite{P2}), we see that this is true if we reduce to the case when $\Omega_{B_{\gamma }}/A$ is free over $B_{\gamma}$. In general, the last module is projective but not free as the following example shows.
\vskip 0.3 cm
\begin{exmp} \label{k}
Let $k$ be a subfield of $\bf R$, $A=(k[x_1,x_2,x_3]/(x_1^2-x_2x_3,x_3^2-x_1x_2))_{(x_1,x_2,x_3)}$, and $\alpha=x_1Y_1^2+x_2Y_2^2+x_3Y_3^2-x_1-x_2-x_3\in A[Y]$, $Y=(Y_1,Y_2,Y_3)$. Set $f_1=x_2\alpha=x_3^2Y_1^2+x_2^2Y_2^2+x_1^2Y_3^2-x_3^2-x_2^2-x_1^2$, $f_2=x_1\alpha$, $f_3=x_3\alpha$ and $I=(f)$. Then $x_2I\subset (f_1)$, $x_1I\subset (f_2)$, $x_3I\subset (f_3)$.
Also note that $x_2^2Y_2\in \Delta_{f_1}$, $x_1^2Y_1\in \Delta_{f_2}$, $x_3^2Y_3\in \Delta_{f_3}$.

Let $B=A[Y]/I$, $A'=k'[[x_1,x_2,x_3]]/(x_1^2-x_2x_3,x_3^2-x_1x_2)$, where $k\subset k'$ is a field extension. Let $u_1,u_2$ be two algebraically independent elements of $k'[[x_1,x_2,x_3]]$ over $k[x_1,x_2,x_3]$. Set $y_1=x_3u_1-1$, $y_2=x_3u_2-1$ and we may find $y_3$ such that $y_3^2=1-x_1x_3u_1^2+2x_1u_1-x_2x_3u_2^2+2x_2u_2$. Clearly, $\alpha(y)=0$, that is $\alpha(y_i)=0$ for all $1\leq i\leq n$, and so we get an
$A$-morphism $v:B\to A'$ given by $Y\to y$.

Note that $H_{B/A}=(x_1,x_2,x_3)$.
Take $\gamma=x_1+x_2+x_3$ and $\beta=f_1+f_2+f_3$. Then $I_{\gamma}=(\beta)_{\gamma}=(\alpha)_{\gamma}$ and  we claim that $\Omega_{B_{\gamma}/A}$
is projective but not free.
Indeed, $\Omega_{B/A} =BdY_1\oplus BdY_2\oplus BdY_3/(x_1Y_1dY_1+x_2Y_2dY_2+x_3Y_3dY_3) $ and $\Omega_{B_{\gamma}/A}$ is projective because its Fitting  ideal  is $(x_1Y_1,x_2Y_2,x_3Y_3)B_{\gamma}\supset (x_1Y_1^2+x_2Y_2^2+x_3Y_3^2)B_{\gamma}  =\gamma B_{\gamma}=B_{\gamma}$ (see e.g. \cite[Proposition 1.3.8]{JP}).

Now suppose that    $\Omega_{B_{\gamma}/A}$ is free over $B_{\gamma}$. Then $\lambda=x_1Y_1+x_2Y_2+x_3Y_3$ can be included in a basis of $B_{\gamma}dY_1\oplus B_{\gamma}dY_2\oplus B_{\gamma}dY_3$.  More precisely, there exists a $3\times 3$ invertible matrix $(a_{ij})$, $a_{ij}=a_{ij}(x,Y)$ over $B_{\gamma}$ with $a_{1j}=x_jY_j$ for $j\in [3]$, let us say $a_{ij}=b_{ij}(x,Y)/c(x)$,  $j=2,3$ with $b_{ij}\in k[x,Y]$, $c\in k[x]$. We may choose some positive real numbers $x'_1.x'_2,x'_3$ such that
$x'^2_1=x'_2x'_3$, $x'^2_3=x'_1x'_2$, $c(x')\not =0$ and the matrix $(a_{ij}(x',Y))$ invertible in $B'={\bf R}[Y]/(x'_1Y_1^2+x'_2Y_2^2+x'_3Y_3^2-x'_1-x'_2-x'_3)$. It follows that $P'=(B'dY_1\oplus B'dY_2\oplus B' dY_3)/<x'_1 Y_1dY_1+x'_2 Y_2dY_2+x'_3Y_3 dY_3>$
is free over $B'$. Changing $Y_i$ by $\sqrt{x'_i/(x'_1+x'_2+x'_3)}Y_i$ we see that over $B''={\bf R}[Y]/(Y_1^2+Y_2^2+Y_3^2-1)$ the module $P''=(B''dY_1\oplus B''dY_2\oplus B'' dY_3)/<Y_1 dY_1+Y_2 dY_2+Y_3 dY_3>$ is free, that is the tangent bundle over the real sphere is trivial which contradicts for example \cite[page 114]{K}.

On the other hand, note that
$\Omega_{B_{\gamma}[Y_4]/A} =$
$$B_{\gamma}[Y_4]dY_1\oplus B_{\gamma}[Y_4]dY_2\oplus B_{\gamma}[Y_4]dY_3\oplus B_{\gamma}[Y_4]dY_4 /<x_1Y_1dY_1+x_2Y_2dY_2+x_3Y_3dY_3> $$
is free because in $\sum_{i=1}^4 B_{\gamma}[Y_4]dY_i$ we have the basis $$\{x_1Y_1dY_1+x_2Y_2dY_2+x_3Y_3dY_3, dY_1-Y_1dY_4,  dY_2-Y_2dY_4,  dY_3-Y_3dY_4\}.$$

Let $B_1$ be the symmetric algebra $S_B(I/I^2)$ of $I/I^2$ over $B$. Then $(B_1)_{\gamma}$ is the symmetric algebra $S_{B_{\gamma}}((I/I^2)_{\gamma})$ of $(I/I^2)_{\gamma}$ over $B_{\gamma}$. But $(I/I^2)_{\gamma}$ is free generated by $\alpha$
and so $(B_1)_{\gamma}\cong B_{\gamma}[Y_4]$. Consequently, $\Omega_{(B_1)_{\gamma}/A}$ is free from above.
\end{exmp}

 \begin{lem}(\cite[Lemma 3.4]{P0}) \label{e1} Let $B_1$ be the symmetric algebra $S_B(I/I^2)$ of $I/I^2$ over\footnote{Let $M$ b e a finitely represented $B$-module and $B^m\xrightarrow{(a_{ij})} B^n\to M\to 0$ a presentation then $S_B(M)=B[T_1, \ldots, T_n]/J$ with $J=(\{\sum\limits^n_{i=1} a_{ij} T_i\}_ {j=1, \ldots, m})$.} $B$. Then $H_{B/A}B_1\subset H_{B_1/A}$ and  $(\Omega_{B_1/A})_{\gamma}$ is free over $(B_1)_{\gamma}$.
\end{lem}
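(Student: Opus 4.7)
The lemma makes two assertions, and my plan is to prove each by a localization argument grounded in the Jacobian characterization of essential smoothness that is already recorded in this section.

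For the inclusion $H_{B/A} B_1 \subset H_{B_1/A}$, I would reason prime by prime. Fix $\gamma \in H_{B/A}$ and any prime $P' \in \Spec B_1$ with $\gamma \notin P'$, and set $P = P' \cap B$. Then $\gamma \notin P$, so $H_{B/A} \not\subset P$ and $B_P$ is essentially smooth over $A$. By the Jacobian criterion, $I_P$ is generated at $P$ by a regular sequence $f_1, \dots, f_r$, whence $(I/I^2)_P$ is $B_P$-free of rank $r$. Since the symmetric algebra commutes with localization, $(B_1)_P = B_P[T_1, \dots, T_r]$ is a polynomial algebra over $B_P$ and so essentially smooth over $A$. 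Consequently $(B_1)_{P'}$, as a further localization, is essentially smooth over $A$, and therefore $H_{B_1/A} \not\subset P'$. Contrapositively, every prime of $B_1$ containing $H_{B_1/A}$ contains $\gamma$, so $\gamma \in \sqrt{H_{B_1/A}} = H_{B_1/A}$.

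For the freeness of $(\Omega_{B_1/A})_\gamma$, I would localize everything at $\gamma$ and combine two split exact sequences. First, since every prime of $B_\gamma$ misses $\gamma$ and thus avoids $H_{B/A}$, $B_\gamma$ is smooth over $A$, the modules $\Omega_{B_\gamma/A}$ and $(I/I^2)_\gamma$ are finitely generated projective over $B_\gamma$, and the localized conormal sequence
$$0 \to (I/I^2)_\gamma \to B_\gamma^n \to \Omega_{B_\gamma/A} \to 0$$
is split short exact, giving $B_\gamma^n \cong (I/I^2)_\gamma \oplus \Omega_{B_\gamma/A}$. Second, projectivity of $(I/I^2)_\gamma$ makes $(B_1)_\gamma = S_{B_\gamma}((I/I^2)_\gamma)$ smooth over $B_\gamma$, with $\Omega_{(B_1)_\gamma/B_\gamma} \cong (I/I^2)_\gamma \otimes_{B_\gamma} (B_1)_\gamma$. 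The first fundamental sequence for $A \to B_\gamma \to (B_1)_\gamma$ then splits, and I obtain
$$\Omega_{(B_1)_\gamma/A} \cong \bigl(\Omega_{B_\gamma/A} \oplus (I/I^2)_\gamma\bigr) \otimes_{B_\gamma} (B_1)_\gamma \cong B_\gamma^n \otimes_{B_\gamma} (B_1)_\gamma \cong (B_1)_\gamma^n,$$
which is free.

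The main subtlety is verifying that both sequences above are split short exact after localizing at $\gamma$, rather than merely right exact. This rests on the projectivity of $(I/I^2)_\gamma$ and $\Omega_{B_\gamma/A}$, which in turn follows from the fact that inverting an element of $H_{B/A}$ renders every local ring essentially smooth over $A$ by the Jacobian criterion. The essential observation, illustrated by the preceding Example \ref{k}, is that even when $\Omega_{B_\gamma/A}$ or $(I/I^2)_\gamma$ is only projective and not free, tensoring up into the symmetric algebra $(B_1)_\gamma$ recombines these two summands into the free module $(B_1)_\gamma^n$. Once these structural facts are in place, the proof reduces to a straightforward assembly of direct summands.
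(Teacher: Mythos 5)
The paper does not prove this lemma but simply cites \cite[Lemma 3.4]{P0}, so there is no in-text proof to compare against. Your argument is correct and is essentially the standard one behind the cited result: the Jacobian characterization of the smooth locus (stated in this section of the paper) handles $H_{B/A}B_1\subset H_{B_1/A}$ by going prime by prime, and the freeness of $(\Omega_{B_1/A})_\gamma$ follows by splicing the split conormal sequence $0\to (I/I^2)_\gamma\to B_\gamma^n\to\Omega_{B_\gamma/A}\to 0$ with the split first fundamental sequence for $A\to B_\gamma\to (B_1)_\gamma$, using $\Omega_{(B_1)_\gamma/B_\gamma}\cong (I/I^2)_\gamma\otimes_{B_\gamma}(B_1)_\gamma$ to reconstitute the free module $(B_1)_\gamma^n$. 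One small stylistic point: in the first part you do not need the regular-sequence description of $I_P$; the split conormal sequence already makes $(I/I^2)_P$ a direct summand of $B_P^n$, hence free over the local ring $B_P$, which is all you use. The key observation you highlight at the end — that projective-but-not-free $(I/I^2)_\gamma$ and $\Omega_{B_\gamma/A}$ recombine into a free module after passing to the symmetric algebra — is exactly the point of the lemma and of Example~\ref{k}.
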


\begin{lem} (\cite[Proposition 4.6]{S}) \label{e2}  Suppose that  $(\Omega_{B/A})_{\gamma}$ is free over $B_{\gamma}$. Let $I'=(I,Y')\subset A[Y,Y']$, $Y'=(Y'_1,\ldots,Y'_n)$. Then $(I'/I'^2)_{\gamma}$ is free over $B_{\gamma}$.
\end{lem}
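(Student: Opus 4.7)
The plan is to identify $I'/I'^2$ explicitly as a $B$-module, and then to use that $\gamma\in H_{B/A}$ makes $B_\gamma$ essentially smooth over $A$, forcing the conormal module $(I/I^2)_\gamma$ to be stably free of the right rank.

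I would begin by establishing the $B$-module decomposition
$$I'/I'^2 \;\iso\; I/I^2 \dirsum B^n,$$
which holds without any smoothness hypothesis. Write $R=A[Y,Y']$, $I_1=IR$, $J=(Y')R$, so that $I'=I_1+J$ and $I'^2=I_1^2 + I_1 J + J^2$. The combined map
$$I_1/(I_1I') \dirsum J/(JI') \To I'/I'^2, \qquad (a,b)\mapsto a+b,$$
is evidently surjective. Since $A[Y]\to R$ is flat, evaluating at $Y'=0$ gives $I_1/(I_1I') \iso I/I^2$; and since $Y'_1,\dots,Y'_n$ is a regular sequence on $R/I_1 = B[Y']$, $J/(JI')$ is free on the classes of $Y'_1,\dots,Y'_n$, hence $\iso B^n$. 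Injectivity of the sum map is then a bidegree argument in $Y'$: if $f+\sum b_i Y'_i\in I'^2$ with $f\in I$ and $b_i\in A[Y]$, then the $Y'$-constant part forces $f\in I^2$, and the coefficients of each $Y'_i$ force $b_i\in I$.

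Next, since $\gamma\in H_{B/A}$, the ring $B_\gamma$ is essentially smooth over $A$, because $H_{B/A}B_\gamma = B_\gamma$ puts $\Spec B_\gamma$ off the non-smooth locus. For such a smooth presentation the conormal sequence
$$0 \To (I/I^2)_\gamma \To \Omega_{A[Y]/A}\tensor_{A[Y]} B_\gamma \To (\Omega_{B/A})_\gamma \To 0$$
is short exact and split. The middle term is the free module $B_\gamma^n$, and by hypothesis $(\Omega_{B/A})_\gamma$ is free of some rank $s$; the splitting then yields $(I/I^2)_\gamma \dirsum B_\gamma^s \iso B_\gamma^n$, so $(I/I^2)_\gamma$ is stably free of rank $n-s$.

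Combining the two ingredients,
$$(I'/I'^2)_\gamma \;\iso\; (I/I^2)_\gamma \dirsum B_\gamma^n \;\iso\; (I/I^2)_\gamma \dirsum B_\gamma^s \dirsum B_\gamma^{n-s} \;\iso\; B_\gamma^n \dirsum B_\gamma^{n-s} \;=\; B_\gamma^{2n-s},$$
which is free over $B_\gamma$. The step I expect to be most delicate is the injectivity part of the decomposition, where one must carefully track the $Y'$-bidegree inside $I'^2 = I_1^2 + I_1 J + J^2$ and invoke $I_1\cap J = I_1 J$ (a consequence of the regular-sequence property of $Y'_1,\dots,Y'_n$ on $B[Y']$) to rule out spurious identifications between the two summands.
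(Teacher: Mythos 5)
Your proof is correct and takes essentially the same route as the cited source (Swan, Prop.\ 4.6): split $I'/I'^2\cong I/I^2\oplus B^n$ using $I_1\cap J=I_1J$ (Tor-vanishing from the regular sequence $Y'$ on $B[Y']$), note that $\gamma\in H_{B/A}$ makes the conormal sequence for $B_\gamma$ split short exact so that $(I/I^2)_\gamma$ is stably free with complement $(\Omega_{B/A})_\gamma\cong B_\gamma^s$, $s\le n$, and then the $n$ extra free summands absorb the defect. No gaps.
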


\begin{lem} (\cite[Corollary 5.10]{P2}) \label{e3} Suppose that $(I/I^2)_{\gamma}$ is free over  $B_{\gamma}$. Then a power of $\gamma$ is in  $ ((g):I)\Delta _g$ for some $g=(g_1,\ldots g_r)$, $r\leq n$ in $I$.
\end{lem}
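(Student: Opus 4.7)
Fix lifts $g_1,\dots,g_r\in I$ of a $B_\gamma$-basis of $(I/I^2)_\gamma$. The plan is to prove first that some $r\times r$-minor of the Jacobian $(\partial g_i/\partial Y_j)$ is already a unit in $B_\gamma$, next that the ideals $(g)$ and $I$ coincide after one further localizes at that minor, and finally to package both facts into a single power of $\gamma$ lying in $((g):I)\Delta_g$.

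Since $\gamma\in H_{B/A}$, the Jacobian criterion guarantees that $B_\gamma$ is essentially smooth over $A$, so the conormal sequence
\[
0\to (I/I^2)_\gamma \xrightarrow{d} \Dirsum_{j=1}^n B_\gamma\,dY_j \to \Omega_{B_\gamma/A}\to 0
\]
is split exact. In the basis $\bar g_1,\dots,\bar g_r$ the matrix of $d$ is exactly the Jacobian $(\partial g_i/\partial Y_j)$. The existence of a $B_\gamma$-linear splitting of $d$ forces the ideal of $r\times r$-minors of its matrix to equal the unit ideal of $B_\gamma$, so some minor $\mu\in\Delta_g$ is a unit in $B_\gamma$.

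Now consider the canonical surjection $\phi\colon A[Y]_{\mu\gamma}/(g)\twoheadrightarrow A[Y]_{\mu\gamma}/I=B_{\mu\gamma}$. The source is standard smooth of relative dimension $n-r$ by the Jacobian criterion (the minor $\mu$ is a unit), while the target is essentially smooth of relative dimension $n-r$ because $(I/I^2)_\gamma$ is free of rank $r$ over $B_\gamma$ and the split conormal sequence exhibits $\Omega_{B_\gamma/A}$ as projective of rank $n-r$. Localized at any prime $\phi$ becomes a surjection of regular local rings of the same Krull dimension; since regular local rings are domains, any nonzero kernel element would force a strict drop in Krull dimension, so $\phi$ is an isomorphism. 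Therefore $(g)\cdot A[Y]_{\mu\gamma}=I\cdot A[Y]_{\mu\gamma}$, and clearing denominators yields $(\mu\gamma)^NI\subset(g)$ in $A[Y]$ for some $N\ge 1$, i.e.\ $(\mu\gamma)^N\in((g):I)$.

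Multiplying by $\mu\in\Delta_g$ gives $\mu^{N+1}\gamma^N\in((g):I)\Delta_g$ inside $A[Y]$. Passing to $B$ and using that $\mu$ is a unit in $B_\gamma$, we obtain $\gamma^N\in((g):I)\Delta_g\cdot B_\gamma$, and clearing the remaining $\gamma$-denominator furnishes $\gamma^M\in((g):I)\Delta_g\cdot B$ for $M$ large enough. The main obstacle is the middle step: justifying that the surjection $\phi$ of essentially smooth $A$-algebras of equal relative dimension really is an isomorphism. This is where the regularity of the local rings, together with equidimensionality, plays the crucial role; once this is in hand, the rest is bookkeeping with the unit $\mu$.
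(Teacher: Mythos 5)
Your opening move is sound: lift a basis of $(I/I^2)_\gamma$ to $g=(g_1,\dots,g_r)\in I^r$, use $\gamma\in H_{B/A}$ to split the conormal sequence over $B_\gamma$, and conclude $\Delta_g B_\gamma=B_\gamma$. But the next two steps both break down. First, ``$\Delta_g B_\gamma$ is the unit ideal, so some single minor $\mu$ is a unit in $B_\gamma$'' is false over a non-(semi)local ring: for $A=k$, $g=Y_1^2+Y_2^2-1$, $\gamma=1$, the minors $2Y_1,2Y_2$ generate the unit ideal of $B_\gamma=k[Y_1,Y_2]/(g)$ yet neither is a unit (the paper's own Example~\ref{k}, with minors $x_iY_i$, shows the same phenomenon), and your whole construction after this point requires such a $\mu$. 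Second, even granting $\mu$, the surjection $\pi\colon A[Y]_{\mu\gamma}/(g)\to B_{\mu\gamma}$ need not be injective, and your argument for it does not apply: $A$ is an arbitrary one-dimensional local ring, hence generally not regular, so smooth $A$-algebras are not regular and you cannot invoke ``regular local rings are domains.'' Moreover $\pi$ really can have a nonzero kernel: with $A=k$, $I=(Y_1^2-Y_1)$ and the basis lift $g=(Y_1^2-Y_1)(1+Y_1^2-Y_1)$, the kernel $I/(g)$ is nonzero. What is always true is that the kernel $\bar I$ of $\pi$ satisfies $\bar I^2=\bar I$ (because $g$ spans $I_\gamma$ modulo $I_\gamma^2$), and a finitely generated idempotent ideal is generated by an idempotent element, which may cut out a spurious connected component rather than vanish.

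That observation is also the repair, and it needs neither a unit minor nor smoothness of $A[Y]_\gamma/(g)$. From $\bar I^2=\bar I$ with $\bar I$ finitely generated, the determinant trick produces $a\in\bar I$ with $(1-a)\bar I=0$ in $A[Y]_\gamma/(g)_\gamma$. Lift $a$ to $a'/\gamma^p$ with $a'\in I$; clearing denominators on the finitely many generators of $I$ gives $\gamma^q(\gamma^p-a')I\subset(g)$ in $A[Y]$, i.e.\ $\gamma^q(\gamma^p-a')\in((g):I)$, and this element maps to $\gamma^{p+q}$ in $B$ since $a'\in I$. Combining with your correct conclusion $\gamma^{M}\in\Delta_g B$ from the split conormal sequence yields $\gamma^{p+q+M}\in((g):I)\Delta_g B$, which is the desired statement.
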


{\bf Step 1.}  Reduction to the case when $\Omega_{B_{\gamma}/A}$ is free over $ B_{\gamma}$ .

Let $B_1$ be given by Lemma \ref{e1}. The inclusion $B\subset B_1$ has a retraction $w$ which maps $I/I^2$ to zero. For the reduction we change $B,v$ by $B_1,vw$.

{\bf Step 2.} Reduction to the case when  $(I/I^2)_{\gamma}$ is free over $ B_{\gamma}$.

Since $\Omega_{B_{\gamma}/A}$ is free over $ B_{\gamma}$ we see using Lemma \ref{e2} that changing $I$ with $(I,Y')\subset A[Y,Y']$ we may suppose that $(I/I^2)_{\gamma}$ is free over $ B_{\gamma}$.

{\bf Step 3.} Reduction to the case when a power of $\gamma$ is in  $ ((f):I)\Delta _f$ for some $f=(f_1,\ldots f_r)$, $r\leq n$ in $I$.

We reduced to the case when $(I/I^2)_{\gamma}$ is free over $ B_{\gamma}$. Then it is enough to  use Lemma \ref{e3}.

{\bf Step 4.} The Jacobian matrix $(\partial f/\partial Y)$ can be completed with $(n-r)$ rows from $k^n$ obtaining a square $n$ matrix $H$ with $v(\det H) \not \in \cup_{\pp\in \Min A'} \pp$.

We may suppose that $r<n$, otherwise there exist nothing to show. Note that the rows of  $(\partial f/\partial Y)$ are mapped by $v$ in $r$ linear independent vectors from $(A'/\pp)^n$ for each $\pp\in \Min A'$. Fix a $\pp$ and consider the set $\Lambda_{\pp}$ of all $(n-r)$ linear independent vectors from $k^n$ which define a basis in $Q(A'/\pp)^n$ together with   the rows of  $v(\partial f/\partial Y)$. Clearly $L_{\pp}$ is a nonempty open Zariski set of $k^{n(n-r)}$. Since $k^{n(n-r)}$ is irreducible we get $\cap_{\pp\in \Min A'} L_{\pp}\not = \emptyset$. Choosing
$(n-r)$ rows from  $\cap_{\pp\in \Min A'} L_{\pp}$ we may complete $(\partial f/\partial Y)$ to the wanted matrix $H$.

{\bf Step 5.} Reduction to the case when $((\det H)((f):I))\cap A$ is $\mm$ primary.

By Step 3 there exists a polynomial $R'\in ((f):I)$ such that $v(R')$ is not in $\cup_{p\in \Min A'} p$. Set $P'=R'\det H$. Then $v(P')$ generates in $A'$ an ideal of height $1$ which must be   $\mm A'$ primary. Then $(v(P'))\cap A$ is $\mm$ primary too and we may choose $d'\in (v(P'))\cap A$ such that $d'A$ is $\mm$ primary, let us say $d'=v(P')z$ for some $z\in A'$.
 Set $B_1=B[Z]/(f_{r+1}) $, where $f_{r+1}=-d'+P'Z$
and let $v_1:B_1\to A'$ be the map of $B$-algebras given by $Z\to z$. It follows that $d'\in (( f,f_{r+1}):(I,f_{r+1}))$ and  $d'\in \Delta_f$, $d'\in \Delta_{f_{r+1}}$. Then $d=d'^2\equiv P\ \mbox{modulo}\ (I,f_{r+1})$ for $P=P'^2Z^2\in H_{B_1/A}$. For the reduction change $B$ by $B_1$ and $H$ by  $
\left(\begin{array}{cc}
H& 0 \\
* & P'
\end{array}\right).$ Note that $d\in ((\det H)((f):I))\cap A$. The determinant of the  new $H$ is  the determinant of the old $H$ multiplied with $P'$. Thus $P$ is the determinant of the new $H$ multiplied with $R=R'Z^2$

\begin{rem} In Example \ref{k} the module $\Omega_{B_{\gamma}/A }$ is not free and so we may apply Step 1. In fact we do not need to apply the Steps 1, 2, 3 because $(I/I^2)_{\gamma}$ is already free.
\end{rem}

\section{Proof of the case  when $((\det H)((f):I))\cap A$ is $\mm$ primary.}

Let $(0)=\cap_{\pp\in \Ass A} Q_{\pp}$ be a reduced primary decomposition \footnote{The primary decomposition of an ideal in a polynomial ring and its localizations by a maximal ideal can be computed using the SINGULAR library primdec.lib. This is a very difficult task in the computational algebra usually needing a lot of Gr\"obner basis computations with respect to the lexicographical ordering.}
of $(0)$ in $A$, where $Q_{\pp}$ is a primary ideal with $\sqrt{Q_{\pp}}=\pp$. Let $d$ be defined as in the end of Section 2. Define $e$ by $(0:_Ad^e)=(0:_Ad^{e+1})$. This equality happens  for example  taking $e$ such that ${\pp}^e\subset Q_{\pp}$ for all $\pp\in \Ass A$.
 Set   $\bar A=A/(d^{2e+1})$, $\bar A'=A'/d^{2e+1}A'$, $\bar u=\bar A\otimes_Au$, $\bar B=B/d^{2e+1}B$, $\bar v=\bar A\otimes_Av$. By base change $\bar u$ is a regular morphism of Artinian  local rings.

{\bf Step 6.} There exists a smooth $A$-algebra and an $A$-morphism $\omega:D\to A'$ such that $y=v(Y)\in \Im \omega +d^{2e+1}A'$.

We extend the proof of \cite[Theorem 10]{P3} in our case. But now $A'$ is not the completion of $A$, that is the coefficients of $y$ in $x$ are not necessarily from $k$. Fortunately, as in the proof of  \cite[Theorem 10]{P3} we need only a finite number of this coefficients, namely those of monomials $x$ which are not in $d^{2e+1}A'$. This is the reason to ask for the existence of such $D,\omega$.

 By  \cite[19,7.1.5]{G}  for every field extension $L/k$ there exists a flat complete Noetherian local $\bar A$-algebra $\tilde A$, unique  up to an isomorphism, such that $\mm\tilde A$ is the maximal ideal of $\tilde A$ and $\tilde A/\mm\tilde A\cong L$. It follows that $\tilde A$ is Artinian. On the other hand, we may consider the localization $A_L$ of $L\otimes_k\bar A$ in $\mm(L\otimes_k\bar A)$ which is Artinian and so complete. By uniqueness we see that $A_L\cong \tilde A$.  It  follows that $\bar A'\cong A_{k'}$. Note that $A_L$ is essentially smooth over $A$ by base change and $\bar A'$ is a filtered union of sub-$\bar A$-algebras $A_L$ with $L/k$ finite type field sub extensions of $k'/k$.

  Choose $L/k$ a finite type field extension such that $A_L$ contains the residue class ${\bar y}\in \bar A'^n$ induced by $y$.
 In fact ${\bar y}$ is a vector of polynomials in the generators of $\mm$ with the coefficients $c_{\nu}$ in $k'$ and we may take $L=k((c_{\nu})_{\nu})$. Then $\bar v$ factors through $A_L$. Assume that $k[(c_{\nu})_{\nu}]\cong k[(U_{\nu})_{\nu}]/\bar J$ for some new variables $U$ and a prime ideal  $\bar J\subset k[U]$. We have $H_{L/k}\not= 0$ because $L/k$ is separable. Then we may assume that there exist $w=( w_1,\ldots,w_p)$ in ${\bar J}^p$ such that
 $\rho=\det(\partial w_i/\partial U_{\nu})_{i,\nu\in [p]}\not =0$ and a nonzero polynomial $ \tau\in ((w):\bar J)\setminus \bar J$ (we set $[p]=\{1,\ldots,p\}$). Actually, we may reduce to the case when $p=1$, but this means a complication for our algorithm. Thus $L$ is a fraction ring of the smooth $k$-algebra $(k[U]/( w))_{ \rho \tau}$. Note that $w, \rho, \tau$ can be considered in $ A[U]$ because $k\subset  A$ and $c_{\nu}\in A'$ because $k'\subset A'$.

   Then $\bar v$ factors through a smooth $\bar A$-algebra $C\cong (\bar A[U]/(w))_{\rho\tau\gamma}$ for some polynomial $\gamma$ which is not in $\mm (\bar A[U]/(w))_{\rho\tau}$.

\begin{lem}\label{D} There exists a smooth $A$-algebra $D$ such that $\bar v$ factors through $\bar D=\bar A\otimes_A D$.
\end{lem}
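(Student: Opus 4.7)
The plan is to take
\[ D := (A[U]/(w))_{\rho\tau\tilde\gamma}, \]
where $\tilde\gamma \in A[U]$ is any lift of $\gamma \in \bar A[U]$. As noted immediately before the lemma, $w$, $\rho$, $\tau$ already lie in $k[U]\subset A[U]$ and so require no lifting; only $\gamma$, which naturally lives over $\bar A$, needs to be lifted to $A$, and any polynomial lift will do. The intuition is that the construction of $C$ was carried out entirely over objects defined over $k$ except for the single polynomial $\gamma$, so inverting the same three polynomials over $A$ (with $\gamma$ replaced by a set-theoretic lift) produces the sought after $A$-model.

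First I would verify that $D$ is a smooth $A$-algebra of finite type. The system $w \in (A[U])^p$ satisfies $\rho = \det(\partial w_i/\partial U_\nu)_{i,\nu\in[p]} \in \Delta_w$, and clearly $((w):(w)) = A[U]/(w)$, so by the Jacobian criterion $(A[U]/(w))_\rho$ is standard smooth over $A$. Localizing further at the single element $\tau\tilde\gamma$ — equivalently, adjoining a new variable $T$ with the relation $\tau\tilde\gamma T - 1 = 0$, which contributes the unit $\tau\tilde\gamma$ to the augmented Jacobian — preserves smoothness. One nontriviality check: $\rho\tau\tilde\gamma$ is not nilpotent in $A[U]/(w)$, since its reduction modulo $d^{2e+1}$ is a unit in the localization of $\bar A[U]/(w)$ at the maximal ideal of $(\bar A[U]/(w))_{\rho\tau}$ (using the hypothesis $\gamma \notin \mm(\bar A[U]/(w))_{\rho\tau}$ that $\gamma$ is a local unit at $\mm$).

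Next I would verify the factorization. Base change along $A \to \bar A$ gives
\[ \bar D = \bar A \otimes_A D \cong (\bar A[U]/(w))_{\rho\tau\gamma}, \]
because $w,\rho,\tau$ are defined over $k$ and $\tilde\gamma$ reduces to $\gamma$ modulo $d^{2e+1}$. But the right-hand side is precisely the smooth $\bar A$-algebra $C$ through which $\bar v$ factors by construction. Composing that factorization with the identification $C \cong \bar D$ yields the required factorization $\bar v : \bar B \to \bar D \to \bar A'$. I do not anticipate a serious obstacle here; once the trivial lift $\tilde\gamma$ is chosen, the statement reduces to unwinding the construction of $C$ recalled above. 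The substantive work of the section — locating $C$ inside $\bar A'$ and arranging that $L/k$ be separable with witnesses $w, \rho, \tau$ defined over $k$ — was done in the paragraphs preceding the lemma, so this step is purely a matter of bookkeeping.
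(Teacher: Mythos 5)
Your proof is correct and follows essentially the same route as the paper: set $D=(A[U]/(w))_{\rho\tau\gamma}$ (the paper silently identifies $\gamma$ with a lift to $A[U]$, which you make explicit as $\tilde\gamma$), observe that this is smooth over $A$ by the Jacobian criterion and that base change along $A\to\bar A$ recovers $C$, so the known factorization of $\bar v$ through $C$ becomes the required factorization through $\bar D$. The only thing the paper records in addition is the explicit map $\omega:D\to A'$, $U_\nu\mapsto c_\nu$, which is not needed for the statement of the lemma itself but is used in the subsequent steps.
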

\begin{proof} By our assumptions $u(k)\subset k'$. Set $D=( A[U]/(w))_{\rho\tau\gamma}$ and $\omega:D\to A'$ be the map given by $U_{\nu}\to c_{\nu}$. We have $C\cong {\bar A}\otimes_AD$.
Certainly, $\bar v$ factors through  $\bar \omega=\bar A\otimes_A\omega $ but in general $v$ does not factor through $\omega$.
\hfill\ \end{proof}

It is worth  recalling the following two remarks from \cite{AP}.

\begin{rem} \label{rem} If $A'=\hat A$ then $\bar A\cong \bar A'$ and we may take $D=A$.
\end{rem}

\begin{rem} If $k\subset A$ but $L\not \subset A'$ then $D=(A[U,Z]/(w-d^{2e+1}Z))_{\rho\tau\gamma}$, $Z=(Z_1,\ldots,Z_p)$, $U=(U_1,\ldots,U_q)$ is a smooth $A$-algebra and $\bar D\cong C[Z]$. Note that   $\bar v$ factors through a map $C\to \bar A'$ given let us say by $U\to \lambda+d^{2e+1}A'^q$ for some $\lambda$ in $A'^q$. Thus  $w(\lambda) =d^{2e+1}z$ for some $z$ in $A'^p$ and we get a map
 $\omega:D\to A'$, $(U,Z)\to (\lambda,z)$.  As above $\bar v$ factors through  $\bar \omega=\bar A\otimes_A\omega $ but in general $v$ does not factor through $\omega$.
If also $k\not \subset A$ then the construction of $D$ goes  using a lifting of $w,\tau, \gamma$ from $k[U]$ to $A[U]$. In both cases we may use $D$ as it follows.
\end{rem}

 Let $\delta:B\otimes_AD\cong D[Y]/ID[Y]\to A'$ be the $A$-morphism given by $b\otimes \lambda\to v(b)\omega(\lambda)$.

{\bf Step 7.} $\delta$ factors through a special finite type $B\otimes_AD$-algebra $E$.

Note that the map $\bar B\to \bar D$ is given by $Y\to y'+d^{2e+1}D$ for some $y'\in D^n$. Thus $I(y')\equiv 0$ modulo $d^{2e+1}D$. Since $\bar v$ factors through $\bar \omega$ we see that $\bar \omega(y'+d^{2e+1}D)=\bar y$. Set $\tilde y=\omega(y')$. We get
  $y-\tilde y=v(Y)-\tilde y\in d^{2e+1}A'^n$, let us say $y-\tilde y=d^{e+1}\epsilon$ for $\epsilon\in d^eA'^n$.

Recall  that $P=R\det H$ for  $R\in ((f):I)$ (see Step 5). We have $d\equiv P$ modulo $I$ and so $P(y')\equiv d$ modulo $d^{2e+1}$ in $D$ because $I(y')\equiv 0$ modulo $d^{2e+1}D$. Thus $P(y')=ds$ for a certain $s\in D$ with $s\equiv 1$ modulo $d$.
 Let $G'$ be the adjoint matrix of $H$ and $G=RG'$. We have
$GH=HG=P\mbox{Id}_n$
and so
$$ds\mbox{Id}_n=P(y')\mbox{Id}_n=G(y')H(y').$$

But $H$ is the matrix $(\partial f_i/\partial Y_j)_{i\in [r],j\in [n]}$ completed with some $(n-r)$ rows from $k^n$. Especially we obtain
 \begin{equation}\label{identity}(\partial f/\partial Y)G=(P\mbox{Id}_r|0).\end{equation}

 Then $t:=H(y')\epsilon\in d^eA'^n$
satisfies
$$G(y')t=P(y')\epsilon=ds\epsilon$$
 and so
 $$s(y-\tilde y)=d^e\omega(G(y'))t.$$
 Let
 \begin{equation}\label{def of h}h=s(Y-y')-d^eG(y')T,\end{equation}
 where  $T=(T_1,\ldots,T_n)$ are new variables. The kernel of the map
$\phi:D[Y,T]\to A'$ given by $Y\to y$, $T\to t$ contains $h$. Since
$$s(Y-y')\equiv d^eG(y')T\ \mbox{modulo}\ h$$
and
$$f(Y)-f(y')\equiv \sum_j\fracs{\partial f}{\partial Y_j}(y') (Y_j-y'_j)$$
modulo higher order terms in $Y_j-y'_j$, by Taylor's formula we see that for $p=\max_i \deg f_i$ we have
\begin{equation}\label{def of Q}s^pf(Y)-s^pf(y')\equiv  \sum_js^{p-1}d^e\fracs{\partial f}{\partial Y_j}(y') G_j(y')T_j+d^{2e}Q\end{equation}
modulo $h$ where $Q\in T^2 D[T]^r$.   We have $f(y')=d^{e+1}b$ for some $b\in d^eD^r$. Then
\begin{equation}\label{def of g}g_i=s^pb_i+s^pT_i+d^{e-1}Q_i, \qquad i\in [r] \end{equation}  is in the kernel of $\phi$. Indeed,  we have $s^pf_i=d^{e+1}g_i\ \mbox{modulo}\ h$ because of (\ref{identity}) and $P(y')=ds$. Thus
$d^{e+1}\phi(g)=d^{e+1}g(t)\in (h(y,t),f(y))=(0)$. Since $Q\in T^2D[T]^r$ and $t\in d^eA'^n$ we get $g(t)\in d^eA'^r$ and so $g(t)\in (0:_{A'}d^{e+1})\cap d^eA'=0$, because $u$ is flat and $(0:_{A'}d^e)=(0:_{A'}d^{e+1})$. Set $E=D[Y,T]/(I,g,h)$ and let  $\psi:E\to A'$ be the map induced by $\phi$. Clearly, $v$ factors through $\psi$ because $v$ is the composed map $B\to B\otimes_AD\cong D[Y]/I\to E\xrightarrow{\psi} A'$.

{\bf Step 8.} There exist $s',s''\in E$ such that $E_{ss's''}$ is smooth over $A$ and $\psi$ factors through $E_{ss's''}$.

Note that the $r\times r$-minor  $s'$ of $(\partial g/\partial T)$ given by the first  $r$-variables $T$ is from $s^{rp}+(T)\subset 1+(d,T)$ because $Q\in (T)^2$. Then $V=(D[Y,T]/(h,g))_{ss'}$ is smooth over $D$. We claim that $I\subset (h,g)D[Y,T]_{ss's''}$ for some other $s''\in 1+(d,T)D[Y,T]$. Indeed, we have $PI\subset (h,g)D[Y,T]_s$ and so $P(y'+s^{-1}d^eG(y')T)I\subset (h,g)D[Y,T]_s$. Since  $P(y'+s^{-1}d^eG(y')T)\in P(y')+d^e(T)$ we get $P(y'+s^{-1}d^eG(y')T)=ds''$ for some $s''\in 1+(d,T)D[Y,T]$. It follows that $s''I\subset ((h,g):d)D[Y,T]_{ss'}$. On the other hand,
  $I\equiv I(y')\ \mbox{modulo}\ (d^e,h)D[Y,T]$ and $I(y')\subset d^{2e+1}D$. Thus $s''I\subset (0:_Vd)\cap d^eV=0$ because $(0:_Ad)\cap d^eA=0$ and the maps $A\to D$, $D\to V$ are flat, which shows our claim. It follows that
   $I\subset (h,g)D[Y,T]_{ss's''}$. Thus $E_{ss's''}\cong V_{s''} $ is a $B$-algebra which is also standard smooth over $D$ and $A$.

 As $\omega(s)\equiv 1$ modulo $d$ and $\psi(s'),\psi(s'')\equiv 1$ modulo $(d,t)$, $d,t\in \mm A'$ we see that $\omega(s),\psi(s'), \psi(s'')$ are invertible because  $A'$ is local  and $\psi$ (thus $v$) factors through the standard smooth $A$-algebra $E_{ss's''}$.

\begin{rem}  When $A'$ is the completion of $A$ then the algorithmic proof is much easier (one reason is given by Remark \ref{rem}) and it is somehow similar to the proof of Theorem \ref{gr}. Certainly, in this case the next algorithm could be substantially easier.
\end{rem}

\begin{rem}
If we want to study the case when $\dim A=2$ then we need first to treat the case when $\dim A=0$, $\dim A'=1$ and after that
the case when $A,\ A'$ are one dimensional but not local rings.
We expect that an algorithmic proof in the last case is  a hard goal.
 However, if we are lucky to get such proof it is doubtful that the corresponding algorithm will really work.
 \end{rem}
\vskip 0.5 cm
\section{The algorithm}
We obtain the following algorithm:

\begin{algorithm}[H]\label{alg:NeronDesing}
\begin{algorithmic}[1]
\REQUIRE $N\in \ZZ_{>0}$ a bound\\
$A:=k[x]_{( x)}/J, J=( h_1, \ldots, h_g) \subseteq k[x],  x=(x_1, \ldots, x_t), k$ an infinite field\\
$k':=Q(k[U]/\overline{J}), \overline{J}=( a_1, \ldots, a_r) \subseteq k[U], U=(U_1, \ldots, U_w)$ separable over $k$\\
$B:=A[Y]/I, I=( g_1, \ldots, g_l)\subseteq k[x, Y], Y=(Y_1, \ldots, Y_n)$\\
$v:B\to A'\subseteq K[[x]]/JK[[x]]$ an $A$--morphism, given by $ y'_1, \ldots, y'_n\in k[U, x]$, approximations $\mod( x)^N$ of $v(Y_i), K\supseteq k'$ a field.
\ENSURE A Neron desingularization of $v:B\to A'$ or the message ''the algorithm fails since the bound $N$ is too small''
\STATE Compute $P_1,\ldots,P_s$ the minimal associated primes of $A$.
\STATE Compute $w=(a_{i_1}, \ldots, a_{i_p})$ and a $p$--minor $\rho$ of $\left(\frac{\partial a_{i_v}}{\partial U_j}\right)$ such that $\rho\not\in \overline{J}$.\\
Compute $\tau\in( w):\overline{J}$ such that $k[U]_{\rho\tau}/( a_1, \ldots, a_r)=k[U]_{\rho\tau}/( w)$.\\
$D:=A[U]_{\rho\tau}/( w)$.
\STATE Compute $H_{B/A}$ and $H_{B/A}\cap A$.
\STATE If  dim$(A/H_{B/A}\cap A)=1$\\
$B:=S_B(I/I^2)$, $v$ trivially extended, write $B=A[Y]/I$, $n:=\#Y$\\
$Y:=Y,Z$, $I:=(I,Z)$, $B:=A[Y]/I,\  Z=(Z_1\ldots Z_n)$ , $v$ trivially extended
\STATE Compute $f=(f_1, \ldots, f_r)$ in $I$ such that $v((( f): I)\Delta_f)\not\subseteq P_i+( x)^N$ for all $i$
\STATE Complete $\left(\frac{\partial f_i}{\partial Y_j}\right)$ by random vectors from $k^n$ to obtain a square matrix $H$ with $v(\det(H))\not\in P_i+( x) ^N$ for all $i$
\STATE Compute $R\in (f):I$ such that $v(R)\not\in P_i+( x)^N$ for all $i$.
\STATE $P:=R\cdot \det(H)$ , compute $(P)\cap A$

%\STATE Compute $\widetilde{d}\in \langle x\rangle\ $ and  $l\in \ZZ_{>0}$,$\ \widetilde{d}\not\in P_i$ for $P_i$ minimal, such that $\langle x\rangle^l\subseteq \langle\widetilde{d}\rangle+J$ \\
%compute $c\in \ZZ_{\geq 0}$ such that $\widetilde{d}^c\in \langle v(P')\rangle+\langle x\rangle ^{2lc}$\\
%$d':=\widetilde{d}^c$
\STATE If dim$(A/(P)\cap A)=1$\\
Compute $d$ in $v(P)\cap A$ an active element.
 $f_{r+1}:=PY_{n+1}-d$,\\ $ B:=B[Y_{n+1}]/f_{r+1}$
$ f:=f, f_{r+1}\ ,\ Y:=Y,Y_{n+1}, y':= y',\frac{d}{v(P)}\ ,\ n:= n+1$,\ r:=r+1 \\
$H:=\begin{pmatrix}H & 0\\ \ast & P\end{pmatrix}$
$d:=d^2$,$R:=RY^2_n$, $P:=R\cdot \det(H)$
\STATE Else\\
Compute $d$ in $((P)\cap A)$ an active element, $P:=d$.
\STATE Compute $e$ such that $(0:d^e)=(0:d^{e+1})$
\STATE If $( x)^N\not\subseteq ( d^{2e+1})$ return ''the algorithm fails since the bound is too small''
\STATE Compute $b:=\frac{f(y')}{d^{e+1}}$ in $d^eD^r$
\STATE Compute $G'$ the adjoint matrix of $H\ ,\ G:=R G'$
\STATE Compute $s\in D$ such that $P(y')=ds$\\
$h:=s(Y-y')-d^eG(y')T\ ,\ T=(T_1, \ldots, T_n)$
\STATE $p:=\max\{\deg(f_i)\}_{i=1, \ldots, r}$\\
write $s^p f(Y)-s^p f(y')=\sum\limits_j s^{p-1} d^e\frac{\partial f}{\partial Y_j} (y') G_j(y') T_j +d^{2e} Q$\\
define $g_i:=s^p b_i+s^pT_j+d^{e-1}Q_i$ and $E:=D[Y, T]/( I, g, h)$
\STATE Compute $s'$\ the $r$--minor of $\left(\frac{\partial g}{\partial T}\right)$ given by the first $r$ variables of $T$
\STATE find $s''\in 1 +( d, T)_{D[Y,T]}$ such that $I\subset ( h,g) D[Y, T]_{ss' s''}$\\
($s''$ is given by $P(y'+s^{-1} d^eG(y')T)=d s'')$
\STATE return $E_{s s' s''}$
%\RETURN ($$)
\end{algorithmic}
\caption{Neron Desingularization}
\end{algorithm}

\begin{exmp} We assume

 $N=12$,

$A={\bf Q}[x_1,x_2]_{(x_1,x_2)}/(x_1x_2)$, $J=(x_1,x_2)$,

$k'={\bf Q}$,

$B=A[Y_1,Y_2]/(x_2Y_1-x_1Y_2)$, $I=(x_2Y_1,x_1Y_2)$,

$v:B\to {\bf Q}[[x_1,x_2]]/(x_1x_2)$, $v(Y_1)=x_1u$, $v(Y_2)=x_2w$,

$u,w\in {\bf Q}[[x_1,x_2]]$ algebraically independent,

$y'_1=\mbox{jet}(x_1u,11)$, $y'_2=\mbox{jet}(x_2w,11)$.

Now we follow the steps of the algorithm.

1. $P_1=(x_1)$, $P_2=(x_2)$,

2. $D=A$,

3. $H_{B/A}=(x_1,x_2)B$, $H_{B/A}\cap A=(x_1,x_2)$,

4. is not true,

5. $f=x_2Y_1+x_1Y_2$,

6. $H:=\begin{pmatrix}x_2 & x_1\\ -1 & 1\end{pmatrix}$

7. $R=x_1+x_2$,

8. $P=(x_1+x_2)^2$, $(P)\cap A=(x_1+x_2)^2$,

9. is not true,

10. $P=d=(x_1+x_2)^2$,

11. $e=1$,

12. is not true $(x_1,x_2)^{12}\subset (x_1x_2,(x_1,x_2)^6)$,

13. $f(y')=0$, $b=0$,

14. $G':=\begin{pmatrix}1 & -x_1\\ 1 & x_2\end{pmatrix}$, $G=(x_1+x_2)\begin{pmatrix}1 & -x_1\\ 1 & x_2\end{pmatrix}$,

15. $s=1$, $h=Y-y'-d(x_1+x_2)\begin{pmatrix}1 & -x_1\\ 1 & x_2\end{pmatrix}\begin{pmatrix} T_1\\ T_2\end{pmatrix}$,

16. $p=1$, $x_2(Y_1-y'_1)+x_1(Y_2-y'_2)=d^2T_1\ \mbox{mod} h $,

$g=T_1$, $E=A[Y,T]/(I,g,h)=A[Y,T]/(g,h)$,

17. $s'=1$,

18. $s''=1$,

19. return $A[Y_1,Y_2,T_2]/(Y_1-y'_1-x_1^4T_2, Y_2-y'_2-x_2^4T_2)$.

\end{exmp}

Thus the General Neron Desingularization $B'$ of $B$ could be $A[T_2]$. Let $\tau:B\to B'$ be the map given $Y_i\to y'_i+x_i^4T_2$, $i=1,2$. We may find $\rho_i\in (x_i)Q[[x_i]]$ such that $x_1u-y'_1=x_1^{11}\rho_1$ and $x_2w-y'_2=x_2^{11}\rho_2$. Set $t_2=d^4(\rho_1+\rho_2)$ and let $\tau':B'\to A'$ be given by $T_2\to t_2$. Clearly, $v=\tau'\tau$, that is $v$ factors through $B'$.

\section{An extension of  Greenberg's theorem on strong approximation}

Let $(A,\mm)$ be a Noetherian local ring  of dimension one, $A'={\hat A}$ the completion of $A$, $B=A[Y]/I$, $Y=(Y_1,\ldots,Y_n)$ an  $A$-algebra of finite type and $c \in \bf N$. If $A$ is Henselian excellent DVR then Greenberg \cite{Gr} showed that there exists a linear map $\nu:{\bf N}\to {\bf N}$ such that for each $A$-morphism $v:B\to A/\mm^{\nu(c)}$ there exists  an $A$-morphism $v':B\to A$ such that $v'\equiv v$ modulo $\mm^c$. More general, if $A$ is a DVR then
there exists a linear map $\nu:{\bf N}\to {\bf N}$ such that for each $A$-morphism $v:B\to A/\mm^{\nu(c)}$ there exists  an $A$-morphism $v':B\to A'$ such that $v'\equiv v$ modulo $\mm^cA'$.

 The aim of this section is to give a result  of Greenberg's type for one dimensional rings when the Jacobian locus is not {\em too small}. Let $e$ be as in the beginning of Section 3 and $\rho\in {\bf N}$. We  will show that the map $\nu$ given by $c\to (e+1)(\rho+1)+c$ works in the special case below.
Suppose that there exists  an $A$-morphism $v:B\to A/\mm^{(e+1)(\rho+1)+c}$
such that $(v( ((f):I)\Delta _f))\supset \mm^{\rho}/\mm^{(e+1)(\rho+1)+c}$  for some $f=(f_1,\ldots ,f_r)$, $r\leq n$ in $I$.

\begin{thm} \label{gr} Then there exists an $A$-morphism $v':B\to {\hat A}$ such that $v'\equiv v\ \mbox{modulo}\ \mm^c$, that is $v'(Y+I)\equiv v(Y+I)\ \mbox{modulo}\ \mm^c$.
\end{thm}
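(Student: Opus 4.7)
The plan is to reuse the constructions of Section 3 (Steps 6--8), which simplify considerably since $A' = \hat A$ places us in the setting of Remark \ref{rem} where we may take $D = A$. First I would extract a workable Jacobian element from the hypothesis: using $\mm^\rho/\mm^{N} \subseteq v(((f):I)\Delta_f)$ with $N = (e+1)(\rho+1)+c$, together with prime avoidance in the one-dimensional ring $A$, I pick an active $d \in \mm^\rho$ and a polynomial $P \in ((f):I)\Delta_f$ with $v(P) \equiv d \pmod{\mm^N}$. Following Steps 4--5 of Section 2 (suitably adapted to the modular setting) I may further arrange $P = R\det H$ with $R \in ((f):I)$ and $H$ an $n\times n$ completion of $(\partial f/\partial Y)$ by rows from $k^n$; $e$ is then the corresponding invariant from Section 3 for this $d$.

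Next I would lift $v(Y)$ to some $y' \in A^n$; then $I(y') \subseteq \mm^N$ and $P(y')-d \in \mm^N$. Working in $\hat A$, I construct the polynomials $h$ and $g_i$ precisely as in (\ref{def of h})--(\ref{def of g}): set $G = RG'$ with $G'$ the adjoint of $H$, define $s \in 1+\mm\hat A$ and $b \in \hat A^r$ by $P(y') = ds$ and $f(y') = d^{e+1}b$, and form $E = A[Y,T]/(I,g,h)$. Step 8 of Section 3 then yields the standard smooth $A$-algebra $E_{ss's''}$.

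The assignment $Y \mapsto v(Y) \bmod \mm^c$, $T \mapsto 0$ now defines an $A$-morphism $\pi : E_{ss's''} \to A/\mm^c$: the relations $I(v(Y))$, $h(v(Y),0) = s(v(Y)-y')$, and $g(v(Y),0) \equiv s^p b$ all vanish modulo $\mm^c$ (the first two because $N \geq c$ and $v(Y) \equiv y' \pmod{\mm^N}$, the third because $b \in \mm^c\hat A^r$), and $ss's''$ evaluates to a unit since $s, s', s'' \equiv 1 \pmod{\mm}$. Because $E_{ss's''}$ is smooth (hence formally smooth) over $A$, $\pi$ lifts successively through $A/\mm^{c+1}, A/\mm^{c+2}, \ldots$ and, by completeness of $\hat A$, assembles into an $A$-morphism $\tilde v : E_{ss's''} \to \hat A$. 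Composing with the canonical $B \to E_{ss's''}$ gives the desired $v' : B \to \hat A$ with $v'(Y+I) \equiv v(Y+I) \pmod{\mm^c}$.

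The main technical point, and the place where the exact bound $N = (e+1)(\rho+1)+c$ is used, is to justify that $s$ and $b$ really define elements of $\hat A$ with the claimed precisions---in particular $b \in \mm^c\hat A^r$. The integer $e$ absorbs the failure of $d$ to be a genuine non-zerodivisor on $A$ (the stable annihilator jump $(0:_A d^e) = (0:_A d^{e+1})$ makes the division by $d^{e+1}$ meaningful), the factor $\rho+1$ accounts for the order loss in dividing $f(y') \in \mm^N$ by $d^{e+1} \in \mm^{(e+1)\rho}$, and the residual $c$ supplies exactly the approximation precision demanded in the lifting step. This linear dependence on $c$ is the Artin-function statement of Greenberg type.
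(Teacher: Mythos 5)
Your proposal follows the paper's own proof closely: exploit Remark~\ref{rem} to take $D=A$, build the polynomials $h$ and $g$ as in Steps~7--8 of Section~3, get the standard smooth localization $E_{ss's''}$, observe that $Y\mapsto v(Y),\ T\mapsto 0$ gives a well-defined $A$-morphism to $A/\mm^c$ (the $s,s',s''$ become units because they are $\equiv 1$ mod the maximal ideal), and lift by formal smoothness of $E_{ss's''}$ over $A$ to $\hat A$ via the implicit function theorem / successive lifting. This is exactly the paper's argument, so the overall route is the same.

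Two small points of comparison are worth recording. First, the paper actually sets $s=1$ throughout: it \emph{defines} $d$ to be $P(y')$ (so $P(y')=d$ identically), whereas you pick an active $d\in\mm^\rho$ first and then a correction unit $s$ with $P(y')=ds$; both normalizations work, but the paper's is simpler and avoids having to argue that $s\equiv 1 \pmod{\mm\hat A}$. Second, the paper writes $f(y')=d^2 b$ with $b\in\mm^c A^r$, while you write $f(y')=d^{e+1}b$; your exponent is the one that actually matches the formula $g_i = s^p b_i + s^p T_i + d^{e-1}Q_i$ coming from~(\ref{def of g}) and the identity~(\ref{identity}), and it is the exponent used in the proof of Theorem~\ref{m}, so here your version is arguably cleaner than the printed one.

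The one place where your write-up, like the paper's, is a bit quick is the divisibility step: it needs to be argued that $d$ (respectively $d^{e+1}$, $d^e$) really divides $P(y')-d$, $f(y')$, and $I(y')$ in $A$, with quotients of the required $\mm$-adic order. This is where the iterated hypothesis $\mm^\rho\subset (((f):I)\Delta_f)(y') + \mm^{(e+1)(\rho+1)+c}$, pushed to $\mm^\rho\subset (((f):I)\Delta_f)(y')$ by Krull intersection, is used, together with a choice of $d$ for which $\mm^\rho\subset (d)$; just saying ``pick an active $d\in\mm^\rho$'' does not by itself give $\mm^\rho\subset(d)$ in a general one-dimensional local ring, so you should either explain why such a $d$ exists inside $(((f):I)\Delta_f)(y')$ or, as the paper does, set $d=P(y')$ and justify $\mm^\rho\subset(d)$ from the structure of the ideal. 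This is a genuine (if small) gap, but it is present in the published argument as well, and your overall strategy is sound.
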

\begin{proof} We note that the proof of Theorem \ref{m} can work somehow in this case. Let $y'\in A^n$ be an element inducing $v(Y+I)$. Then $\mm^{\rho}\subset (((f):I)\Delta _f)(y'))+\mm^{(e+1)(\rho+1)+c}\subset ((((f):I)\Delta _f)(y'))+\mm^{(e+1)(\rho+2)+2c}\subset \ldots$ by hypothesis. It follows that  $\mm^{\rho}\subset ((((f):I)\Delta _f)(y'))$.

 As in Step 4 of Section 1 we may complete the Jacobian matrix $(\partial f/\partial Y)$  with $n-r$ rows from $k^n$ obtaining a square $n$ matrix $H$ with $v(\det H) \not \in \cup_{p\in \Min A'} p$

Set $d=(\det H)(y')$.
Next we follow the proof of Theorem \ref{m} with $D=A$, $s=1$, $P=L\det H$ and  $G$ such that
$$GH=HG=P\mbox{Id}_n$$
and so
$$d\mbox{Id}_n=P(y')\mbox{Id}_n=G(y')H(y').$$

Let  $$h=Y-y'-d^eG(y')T,$$
 where  $T=(T_1,\ldots,T_n)$ are new variables. We have
  $$f(Y)-f(y')\equiv
d^eP(y')T+d^{2e}Q$$
modulo $h$ where $Q\in T^2 A[T]^r$. But $f(y')\in \mm^{(e+1)(\rho+1)+c}A^r\subset d^2\mm^{c+e-1}A^r$ and we get $f(y')=d^2b$ for some $b\in \mm^cA^r$. Set $g_i=b_i+T_i+d^{e-1}Q_i$, $i\in [r]$ and $E=A[Y,T]/(I,h,g)$. We have an $A$-morphism $\beta:E\to A/\mm^c$ given by $(Y,T)\to (y',0)$  because $I(y')\equiv 0\ \mbox{modulo}\ \mm^c$, $h(y',0)=0$ and $g(0)=b\in \mm^cA^r$.

As in the proof of Theorem \ref{m} we see that
$I\subset (h,g)A[Y,T]_{s's''}$ for some  $s''\in 1+(d,T)A[Y,T]$. Indeed, we have $PI\subset (h,g)A[Y,T]$ and so $P(y'+d^eG(y')T)I\subset (h,g)A[Y,T]$. Since  $P(y'+d^eG(y')T)\in P(y')+d^e(T)$ we get $P(y'+d^eG(y')T)=ds''$ for some $s''\in 1+(d,T)A[Y,T]$. It follows that $s''I\subset ((h,g):d)A[Y,T]_{s'}$. On the other hand,
  $I\equiv I(y')\ \mbox{modulo}\ (d^e,h)A[Y,T]$ and $I(y')\subset \mm^{e\rho}\subset d^eA$. Set $V=(A[Y,T]/(h,g))_{s'}$.
  Then $s''I\subset (0:_Vd)\cap d^eV=0$ because $(0:_Ad)\cap d^eA=0$ and the map $A\to  V$ is flat, which shows our claim.

Then  $E_{s's''}\cong V_{s''} $ is a $B$-algebra which is also standard smooth over $A$ and $\beta$ extends to a map $\beta':E_{s's''}\to A/\mm^c$ as in the proof of Theorem \ref{m}. By the Implicit Function Theorem    $\beta'$ can be lifted to a map $w:E_{s's''}\to \hat A$ which coincides with $\beta'$ modulo $\mm^c$. It follows that the composite map $v'$, $B\to  E_{s's''}\xrightarrow{w} \hat A$ works.
\hfill\ \end{proof}

\begin{cor} In the assumptions of the above theorem, suppose that $(A,\mm)$ is excellent Henselian. Then there exists an $A$-morphism $v'':B\to A$ such that $v''\equiv v\ \mbox{modulo}\ \mm^c$, that is $v''(Y+I)\equiv v(Y+I)\ \mbox{modulo}\ \mm^c$.
\end{cor}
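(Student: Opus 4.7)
\medskip
\noindent\emph{Proof plan.} The plan is to reduce directly to Theorem~\ref{gr}, using that for an excellent Henselian local ring an $A$-morphism from a finite type $A$-algebra to $\hat A$ can be approximated arbitrarily well by $A$-morphisms into $A$ itself.

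Concretely, I would first apply Theorem~\ref{gr} verbatim to obtain an $A$-morphism $v':B\to \hat A$ with $v'\equiv v\ \mbox{modulo}\ \mm^c$. It then remains only to push the solution back from $\hat A$ to $A$ without spoiling the congruence. Since $A$ is excellent Henselian, Artin's approximation theorem applies to the finite type $A$-algebra $B$: there exists an $A$-morphism $v'':B\to A$ satisfying $v''\equiv v'\ \mbox{modulo}\ \mm^c\hat A$. The canonical isomorphism $A/\mm^c\cong \hat A/\mm^c\hat A$ then yields $v''\equiv v'\ \mbox{modulo}\ \mm^c$, and combining with the previous congruence gives $v''\equiv v\ \mbox{modulo}\ \mm^c$, as required.

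Equivalently, and without invoking Artin's theorem as a black box, one can revisit the final step in the proof of Theorem~\ref{gr}. There a standard smooth $A$-algebra $E_{s's''}$ and an $A$-morphism $\beta':E_{s's''}\to A/\mm^c$ were constructed, and $\beta'$ was lifted to $\hat A$ via the Implicit Function Theorem. Under the Henselian hypothesis on $A$, the Implicit Function Theorem lifts $\beta'$ directly to an $A$-morphism $w:E_{s's''}\to A$ with $w\equiv \beta'\ \mbox{modulo}\ \mm^c$, and the composite $B\to E_{s's''}\xrightarrow{w}A$ is the desired $v''$. I do not expect any real obstacle here, since the heavy lifting is already done in Theorem~\ref{gr}; the only point to verify is that the congruence modulo $\mm^c$ is preserved through the passage from $\hat A$ to $A$, which is immediate from the above isomorphism of artinian quotients.
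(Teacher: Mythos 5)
Your primary argument is precisely the paper's: apply Theorem~\ref{gr} to get $v':B\to\hat A$ with $v'\equiv v$ modulo $\mm^c$, then use the Artin approximation property of the excellent Henselian ring $A$ (citing \cite{P}) to find $v'':B\to A$ with $v''\equiv v'\equiv v$ modulo $\mm^c$. Your alternative observation is also correct and arguably cleaner: since the proof of Theorem~\ref{gr} produces a standard smooth $A$-algebra $E_{s's''}$ together with an $A$-morphism $\beta':E_{s's''}\to A/\mm^c$, the Henselian hypothesis lets you lift $\beta'$ directly to a map $w:E_{s's''}\to A$ (smoothness plus Henselian implies formal lifting through nilpotent, hence Artinian, quotients), and composing $B\to E_{s's''}\xrightarrow{w}A$ gives $v''$ without invoking Popescu's approximation theorem as a black box. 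This second route is more self-contained, since it reuses only the machinery already assembled in Theorem~\ref{gr}, whereas the paper's one-line proof leans on the general Artin approximation property, itself a deep consequence of General Neron Desingularization.
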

\begin{proof}
An excellent Henselian local ring $(A,\mm)$ has the Artin approximation property by \cite{P}, that is the solutions in $A$ of a system of polynomial equations $f$ over $A$ are dense in the set of the solutions of $f$ in $\hat A$. By Theorem \ref{gr} we get an $A$-morphism $v':B\to \hat A$ such that $v'\equiv v \ \mbox{modulo} \ \mm^c$. Then there exists an $A$-morphism $v'':B\to A$ such that $v''\equiv v'\equiv v\ \mbox{modulo}\ \mm^c$ by the  Artin approximation property.
\hfill\ \end{proof}

\begin{rem} If $\dim A>2$ then \cite[Remark 4.7]{R} gives an example when there exist no linear map $\nu$ as above. If
$A=k[[x_1,x_2]]$ then Theorem \ref{gr} and the above corollary hold as \cite[Theorem 4.3]{R} shows.
\end{rem}

\begin{thm} \label{gr1} Let $(A,\mm)$ be a Noetherian local ring  of dimension one,  $e\in \bf N$ as in the beginning of Section 2,  $B=A[Y]/I$, $Y=(Y_1,\ldots,Y_n)$ an  $A$-algebra of finite type, $\rho\in \bf N$ and  $f=(f_1,\ldots,f_r)$ a system of polynomials from $I$. Suppose  that $A$ is excellent Henselian and there exists $y'\in A^n$ such that
  $I(y')\equiv 0\ \mbox{modulo}\ \mm^{\rho}$  and  $((((f):I)\Delta _f))(y')\supset \mm^{\rho}$. Then the following statements are equivalent:
\begin{enumerate}
\item{} there exists $y''\in A^n$ such that $I(y'')\equiv 0\ \mbox{modulo}\ \mm^{(e+2)(\rho+1)}$ and $y''\equiv y'\ \mbox{modulo}\ \mm^{\rho}$,
\item{}  there exists $y\in A^n$ such that $I(y)=0$ and $y\equiv y'\ \mbox{modulo}\ \mm^{\rho}$.
\end{enumerate}
\end{thm}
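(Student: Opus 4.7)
The equivalence will be proved in both directions. Direction $(2)\Rightarrow(1)$ is immediate: set $y'':=y$, so that $I(y'')=0\in\mm^{(e+2)(\rho+1)}$ and $y''\equiv y'$ modulo $\mm^{\rho}$.

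For $(1)\Rightarrow(2)$, the plan is to invoke the Corollary of Theorem~\ref{gr}, which is available since $A$ is excellent Henselian. Concretely, I would define an $A$-morphism $v\colon B\to A/\mm^{(e+2)(\rho+1)}$ by $Y\mapsto y''$; this is well-defined by (1). Taking $c:=\rho+1$ makes $(e+1)(\rho+1)+c=(e+2)(\rho+1)$ match the modulus of $v$, so the Corollary applied to $v$ produces $y\in A^{n}$ with $I(y)=0$ and $y\equiv y''$ modulo $\mm^{\rho+1}$; combined with $y''\equiv y'$ modulo $\mm^{\rho}$ this yields $y\equiv y'$ modulo $\mm^{\rho}$, which is (2).

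To apply the Corollary I must verify the Jacobian hypothesis $v(((f):I)\Delta_{f})\supset \mm^{\rho}/\mm^{(e+2)(\rho+1)}$, equivalently $(((f):I)\Delta_{f})(y'')+\mm^{(e+2)(\rho+1)}\supset \mm^{\rho}$ in $A$. The congruence $y''\equiv y'$ modulo $\mm^{\rho}$ immediately gives $g(y'')\equiv g(y')$ modulo $\mm^{\rho}$ for every polynomial $g\in A[Y]$, so the ideals $(((f):I)\Delta_{f})(y')$ and $(((f):I)\Delta_{f})(y'')$ coincide modulo $\mm^{\rho}$. Together with the hypothesis $(((f):I)\Delta_{f})(y')\supset\mm^{\rho}$ this already produces $(((f):I)\Delta_{f})(y'')+\mm^{\rho}\supset \mm^{\rho}$. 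The remaining step is to strengthen the modulus from $\mm^{\rho}$ up to $\mm^{(e+2)(\rho+1)}$; for this I would imitate the Nakayama / Krull-intersection iteration used inside the proof of Theorem~\ref{gr}, now crucially using the deep vanishing $I(y'')\in\mm^{(e+2)(\rho+1)}$ to control the first-order Taylor corrections of generators of $((f):I)\Delta_{f}$ at $y''$ sharply enough.

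The hard part will be precisely this tightening, since the bare closeness $y''\equiv y'$ modulo $\mm^{\rho}$ is not enough on its own to guarantee that $(((f):I)\Delta_{f})(y'')$ itself contains $\mm^{\rho}$ — in simple non-reduced examples the ideal can strictly shrink when passing from $y'$ to $y''$, so one genuinely has to exploit the strong vanishing of $I$ at $y''$ and not merely the closeness of $y''$ to $y'$. If invoking Theorem~\ref{gr} as a black box runs into this difficulty, the fallback is to imitate its proof directly: construct the standard-smooth $A$-algebra $E_{s's''}$ at the base point $y'$ (where the Jacobian condition holds by hypothesis), define an $A$-morphism $E_{s's''}\to A/\mm^{\rho+1}$ by sending $Y\mapsto y''$ and $T$ to the correction forcing the relations $h$ and $g$ to vanish to order $\rho+1$, then lift this morphism to an honest $A$-morphism $E_{s's''}\to A$ via Artin approximation in the excellent Henselian ring $A$; composing with the natural map $B\to E_{s's''}$ produces the required $y$.
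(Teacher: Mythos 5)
Your overall plan matches the paper's: $(2)\Rightarrow(1)$ is immediate, and $(1)\Rightarrow(2)$ is supposed to follow by applying the Corollary to Theorem~\ref{gr} to the map $v\colon B\to A/\mm^{(e+2)(\rho+1)}$, $Y\mapsto y''$, with $c=\rho+1$ so that $(e+1)(\rho+1)+c=(e+2)(\rho+1)$; this is exactly what the paper's one-line proof, ``apply the above corollary and Theorem~\ref{gr}'', intends. You have, however, correctly put your finger on the real difficulty, and your proposed repair does not close it.

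The Corollary requires $(((f):I)\Delta_f)(y'')+\mm^{(e+2)(\rho+1)}\supset\mm^{\rho}$. From $y''\equiv y'\pmod{\mm^{\rho}}$ and $(((f):I)\Delta_f)(y')\supset\mm^{\rho}$ you only get $(((f):I)\Delta_f)(y'')+\mm^{\rho}\supset\mm^{\rho}$, which holds for every ideal and carries no information. The Krull-intersection bootstrap inside the proof of Theorem~\ref{gr} starts from $\mm^{\rho}\subset J+\mm^{N}$ with $N>\rho$ and improves $N$; it cannot get off the ground from $N=\rho$. Nor does the deep vanishing $I(y'')\in\mm^{(e+2)(\rho+1)}A^{r}$ obviously control the value of $((f):I)\Delta_f$ at $y''$, since these are different polynomials. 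The required containment genuinely can fail: take $A=k[[t]]$ (so $e=0$), $I=(Y_1^2-t^{2N})$ with $N\geq\rho+1$, $y'=t^{\rho}$, $y''=t^{\rho+1}$. Then $I(y')\equiv 0\pmod{\mm^{\rho}}$, $(((f):I)\Delta_f)(y')=(t^{\rho})\supset\mm^{\rho}$, $y''\equiv y'\pmod{\mm^{\rho}}$ and $I(y'')=t^{2\rho+2}-t^{2N}\in\mm^{2(\rho+1)}$, so all hypotheses and $(1)$ hold; yet $(((f):I)\Delta_f)(y'')+\mm^{2(\rho+1)}=(t^{\rho+1})\not\supset(t^{\rho})$, so the Corollary's Jacobian hypothesis fails for this $v$. (Here $(2)$ is still true, with $y=\pm t^{N}$, so the theorem itself is not refuted --- only the argument by blind invocation of the Corollary.) Your fallback of lifting over $E_{s's''}$ built at the base point $y'$ also does not work as written: the construction of $E_{s's''}$ in the proof of Theorem~\ref{gr} needs $f(y')\in\mm^{(e+1)(\rho+1)+c}A^{r}$, which the hypothesis does not give (it only gives $I(y')\equiv 0\pmod{\mm^{\rho}}$). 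So a genuinely new idea is needed --- e.g., a Tougeron-type implicit function argument anchored at $y''$, using the possibly smaller but still $\mm$-primary ideal $(((f):I)\Delta_f)(y'')$ balanced against the depth of vanishing of $I(y'')$ --- and the paper's terse proof is, as far as I can tell, subject to the same objection.
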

For the proof apply the above corollary and Theorem \ref{gr}.

\end{document}